\title[Neumann problem in the Heisenberg Group]{The Neumann problem for Kohn-Laplacian on the Heisenberg Group $\mathbb H_n$}
\def\H{\mathbb H}
\def\C{\mathbb C}
\def\R{\mathbb R}
\def\N{\mathbb N}
\def\ben{\begin{eqnarray*}}
\def\een{\end{eqnarray*}}
\def\beg{\begin{eqnarray}}
\def\eeg{\end{eqnarray}}
\newtheorem{thm}{Theorem}[section]
\newtheorem{defn}[thm]{Definition}
\newtheorem{lem}[thm]{Lemma}
\author[S. Dubey, A. kumar and M. M. Mishra ]{Shivani Dubey, Ajay Kumar* and Mukund Madhav Mishra}
\address{Department of Mathematics, University of Delhi, Delhi, India}
\email{shvndb@gmail.com}
\address{Department of Mathematics, University of Delhi, Delhi, India}
\email{akumar@maths.du.ac.in}
\address{Department of Mathematics, Hans Raj College, University of Delhi, Delhi, India}
\email{mukund.math@gmail.com}
\thanks{*Corresponding author. Email: akumar@maths.du.ac.in}
\numberwithin{equation}{section}
\date{}
\begin{document}
\begin{abstract}
Existence and uniqueness of the solution of the Neumann problem for the Kohn-Laplacian on the Kor\'{a}nyi ball of the Heisenberg group $\H_n$ are discussed. Explicit representations of Green's type function (Neumann function) for the Kor\'{a}nyi ball in $\H_n$ for circular functions has been obtained. This function is then used on above region in $\H_n$ to solve the inhomogeneous Neumann boundary value problem for circular data.
\end{abstract}
\keywords{Heisenberg group; sub-Laplacian; Horizontal normal vectors; Neumann function; Spherical harmonics}
\subjclass[2010]{31B20; 35N15; 33C55; 45B05}
\maketitle
\section{Introduction}
The boundary value problems have much broader utility in physics, electrostatics and magnetic field etc. These boundary conditions have a physical interpretation where we keep the ends of our rod at freezing without regulating the heat flow in or out of the endpoints.The concept of Green type function for Neumann problem or Neumann function has been considered by several authors in different domains of $\R^n$ \cite{copa, naya, xu}.\\
The classical Neumann problem is to find a function $u:\; \Omega \rightarrow \R ,\; \Omega$ is a bounded domain in $\R^n$ and $\Delta$ is usual Laplacian on $\Omega$  such that $u \in C^2(\Omega) \cap C^1(\bar{\Omega})$ and 
\begin{eqnarray} \label{1}
\Delta u &=& 0\;\; \text{in} \;\; \Omega \nonumber\\
\frac{\partial u}{\partial n} &=& f \;\; \text{on}\;\; \partial \Omega
\end{eqnarray}
where $n$ is the outward normal to $\Omega$ and $f$ is a given continuous function on $\partial \Omega$. Firstly it should be noted that $(\ref{1})$ cannot have a solution for every $f$. This is clear from the physical interpretation of $(\ref{1})$ as a steady-state heat conduction problem. There are no sources in the region $\Omega$ and the heat flow is prescribed on $\partial \Omega$. These conditions are consistent with the steady state only if the total heat flow through $\partial \Omega$ vanishes. Accordingly, a solution of $(\ref{1})$ will exist only if 
\begin{eqnarray} \label{2}
\int_{\partial \Omega}{f(\xi)d\xi}=0.
\end{eqnarray} However, the solution is unique up to a constant whenever it exists.
 A Neumann's function $N(x,y)$ for $\Delta$ on $\Omega$ is a function $N$ having the properties $x \rightarrow N(x,y)$ belongs to $C^2(\bar{\Omega}\setminus \{y\})$,$$\Delta_x N(x,y)=0,\; \text{for} \;x \in \Omega\; \text{and}\; \frac{\partial N(x,y)}{\partial n_x}=0,\; \text{for}\; x \in \partial \Omega.$$ When the existence of the solution is known, the solution, up to a constant is given by $$u(y)=-\int_{\partial\Omega}{N(x,y)f(x)d\sigma_x}.$$ The theory of Neumann function has been widely studied in the case of domains in Euclidean spaces. The Neumann function for the sphere in $\R^3$ has been constructed using the classical method of images and expressed in terms of eigenvalues associated with the surface, leading to an analogue of the Poisson integral as a solution to the Neumann problem for the sphere \cite{naya}.\\ On Euclidean spaces, the Laplace operator is an elliptic operator which is invariant under translations and rotations, and is homogeneous of degree two. Analogously one can define a Laplace operator on non-Euclidean spaces such as Lie groups. The next best class of operators to look at can be one with relaxed regularity conditions such as sub-ellipticity or hypoellipticity \cite{evan, stein}. A Laplacian with this relaxed condition is, in literature, called a sub-Laplacian. The Heisenberg group $\H_n$ is one of the simplest examples of a non commutative Lie group and the Heisenberg sub-Laplacian also known as the Kohn-Laplacian is the most important prototype of the sub-Laplacian. The fundamental solution for the sub-Laplacian on the Heisenberg group was first given by Folland \cite{fol}. The Dirichlet problem on the Heisenberg group and existence of unique solution was discussed in \cite{gav, jeri}. However, the general Green function is not known for any domain in $\H_n, n>1$.\\ Our aim is to formulate a boundary value problem with a Neumann type boundary condition for the Koranyi ball $B$ on the Heisenberg group
\ben
L_0 u&=&f \;\text{in}\; B,\\
\partial ^ \perp u&=&g \;\text{on}\; \partial B,
\een $\partial ^ \perp$ is an operator akin to normal derivative and will be described later. We establish existence and uniqueness of the solution of such a problema and, in certain particular case, we obtain a kernel function $N_B(\eta,\xi)$ to represent the solution by the Poisson like formula 
$$u(\eta)=\int_B{N_B(\eta,\xi)f(\xi)dv(\xi)}-\int_{\partial B}{N_B(\eta,\xi)g(\xi)d\sigma(\xi)}.$$ \\
\section{The Heisenberg group $\H_n$ and Horizontal normal vectors}
\subsection{The Heisenberg Group}
The Heisenberg group $\mathbb{H}_n$ is the set of all pairs $[z,t]\in \mathbb{C}^n\times\mathbb{R}$ with the operation
$$[z,t].[z',t']=[z+z',t+t'+2 \Im (z.\bar{z}')],$$ where $z=(z_1, \ldots, z_n), z.\bar{z}'=z_1\bar{z_1}'+ \ldots +z_n\bar{z_n}'$. 
A basis of left invariant vector fields on $\H_n$ is given by $\{Z_j,\bar{Z}_j,T:1\leq j\leq n\}$ where, 
\begin{eqnarray*}
Z_j &=& \partial_{z_j}+i\bar{z}_j\partial_t ;\\
\bar{Z}_j &=& \partial_{\bar{z}_j}-iz_j\partial_t ;\\
T &=& \partial_t.
\end{eqnarray*}
If we write $z_j=x_j+iy_j$ and define,
\ben
X_j&=& \partial _{x_j}+2y_j \partial _t,\\
Y_j&=& \partial _{y_j}-2x_j \partial _t,
\een
then, $$Z_j=\frac{1}{2}(X_j-iY_j),$$ and $\{X_j, Y_j, T\}$ is a basis. 
The sublaplacian $\textsl{L}$ on $\mathbb{H}_n$ is explicitly given by
$$\textsl{L}=-\sum_{j=1}^n\left(X_j^2 + Y_j^2\right).$$
Let $L_0$ denote the slightly modified subelliptic operator $-\frac{1}{4}\textsl{L}$. The natural gauge on $\mathbb{H}_n$ is given by
$$N([z,t])=({|z|}^4+t^2)^\frac{1}{4}.$$
The fundamental solution for $L_0$ on  $\mathbb{H}_n$ with pole at identity is given in \cite{fol} as 
$$g_e(\xi)=g_e([z,t])=a_0(|z|^4+t^2)^{-\frac{n}{2}},$$
where,
$$a_0=2^{n-2}\frac{(\Gamma(\frac{n}{2}))^2}{\pi^{n+1}},$$ 
is the normalization constant and $\xi=[z,t]$. The fundamental solution of $L_0$ with pole at $\eta$ is given by

$$g_\eta(\xi)=g_e(\eta^{-1}\xi).$$
From \cite{kor6}, for $\eta=[z',t']$ and $\xi=[z,t]$,

$$g_\eta(\xi)=a_0|C(\eta,\xi)-P(\eta,\xi)|^{-n},$$
where,

$C(\eta,\xi)=|z|^2+|z'|^2+i(t-t')$  and $P(\eta,\xi)=2z.\bar{z'}.$\\
For an integrable function $f$ on $\mathbb{H}_n$, we denote the average of $f$ by

$$\bar{f}([z,t])=\frac{1}{2\pi}\int_0^{2\pi}f([e^{i\theta}z,t])d\theta.$$
A function $f$ is said to be circular if $f([z,t])=\bar{f}([z,t])$, for $[z,t] \in \H_n$.\\
As in \cite{kor6}, the average of the fundamental solution with pole at $\eta$ is given by
$$\bar{g}_\eta(\xi)=a_0|C(\eta,\xi)|^{-n}F\left(\frac{n}{2},\frac{n}{2};n;\frac{|P(\eta,\xi)|^2}{|C(\eta,\xi)|^2}\right),$$
$F$ being the Gaussian hypergeometric function \cite{EDR}.\\
\subsection{Horizontal Normal Vectors}
As in \cite{gav2} and \cite{kor83}, we define a singular Riemannian metric $(M_0)$ as follows. On the linear span of $X_j$, $Y_j (1 \leq j \leq n)$ we define an inner product ${\langle .,. \rangle_0}$ by the condition that the vectors $X_j,\; Y_j$ form an orthonormal system. For vectors not in this span we say that they have infinite length. A vector is said to be horizontal if it has finite length. The horizontal gradient $\nabla_0F$ of a function $F$ on $\H_n$ is defined as the unique horizontal vector such that $$\langle \nabla_0F,V \rangle_0=V. F,$$ where $V.F$ is action of $V$ on $F$, for all horizontal vectors $V$. $\nabla_0F$ can be explicitly written as
$$\nabla_0F=\sum_j{\{(X_jF)X_j+(Y_jF)Y_j\}}.$$
 If a hypersurface in $\H_n$ is given as the level set of a smooth function $F$, at any regular point i.e. a point at which $\nabla_0 F\ne 0$ the horizontal normal unit vector will be defined by $$\frac{\partial}{\partial n_0}:=\frac{1}{||\nabla_0F||_0}\nabla_0F.$$ More precisely, this is the horizontal normal pointing outwards for the domain $\{F<0\}$. A point $\zeta$ on the surface is termed ``characteristic" if $\nabla_0 F(\zeta)=0.$ For smooth $F$, the set of characteristic points forms a lower dimensional (and hence of measure zero) subset of the boundary. We define $n_0=\frac{\partial}{\partial n_0}I$, where $I$ is the identity function of any open neighbourhood of the surface $\{F=0\}.$
\section{The Neumann Problem and its well posedness}
In this section, we establish the uniqueness and existence of the Neumann boundary value problem for Kor\'{a}nyi ball in $\H_n$.
\subsection{Formulation of the Neumann problem}
 Let $D$ be a bounded domain with a smooth boundary $\partial D$ given as the level set of a smooth function $G$ i.e, $\partial D=\{\zeta\in \H_n : G(\zeta)=0 \}$. We define the class  $\emph{C}_*(D)$ as follows $$\emph{C}_*(D)=\{f \in C^2(D)\cap C(\bar{D}):\;\lim_{\zeta \rightarrow \zeta_0} \frac{\partial}{\partial n_0}f(\zeta)\;\text{exists for all characteristic points}\; \zeta \in \partial D\}$$ where the limit in above definition is taken under the relative topology of $\partial D$. Now, define $\partial^\perp : \emph{C}_*(D) \rightarrow C(\partial D) $  $$\partial ^ \perp f(\zeta)=\frac{\partial f}{\partial n_0}(\zeta)\; \text{if}\; \zeta\; \text{is non characteristic point of}\; \partial D,$$
$$ \partial^ \perp f(\zeta_0)=\lim_{\zeta \rightarrow \zeta_0}\frac{\partial f}{\partial n_0}(\zeta),\; \text{when}\; \zeta_0\; \text{is a characteristic point of}\; \partial D.$$ Since characteristic points form a lower dimensional subset of the boundary therefore Gaveau's Green formula \cite{gav} can be written as 
\beg \label{***}
\int_D{(uL_0v-vL_0u)dv(\xi)}=\int_{\partial D}{\left(u\partial^\perp v-v\partial^\perp u\right)d\sigma(\xi)},
\eeg where the surface element $d\sigma$ can be given as $$d\sigma=\frac{\left\|\nabla_0 G\right\|_0}{\left\|\nabla G\right\|}ds,$$ $ds$ is Euclidean surface element. 
 The Neumann problem for any domain $D$ in $\H_n$ is to find a function $u \in \emph{C}_*(D)$ such that
\begin{eqnarray} \label{7}
L_0 u&=& 0 \;\text{in}\; D\nonumber\\
\partial ^ \perp u &=& g \;\text{on}\; \partial D 
\end{eqnarray} where $g$ is a continuous function defined on $\partial D$,
is unique upto a constant.
\begin{lem} \label{y}
(Green's first identity) If $D$ is a bounded domain in $\H_n$ with smooth boundary $\partial D$ and $u,\;v$ are $C^1$ functions on $\bar{D}$, then
$$\int_{\partial D}{v\partial^\perp u d\sigma}=\int_D{(vL_0u+\nabla_0 v.\nabla_0 u)d\eta}.$$
\end{lem}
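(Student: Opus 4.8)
The plan is to prove this as the sub-Riemannian analogue of the classical Green's first identity, obtained by applying a divergence theorem to the horizontal vector field $W=v\nabla_0u$. Concretely, once a divergence theorem of the form $\int_D\operatorname{div}_0W\,d\eta=\int_{\partial D}\langle W,n_0\rangle_0\,d\sigma$ is available for $C^1$ horizontal fields $W=\sum_j(a_jX_j+b_jY_j)$ (with horizontal divergence $\operatorname{div}_0W=\sum_j(X_ja_j+Y_jb_j)$), the identity drops out by the Leibniz rule. So the first task is to record this horizontal divergence theorem.

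First I would reduce the horizontal divergence theorem to the ordinary Euclidean one. Writing $X_j=\partial_{x_j}+2y_j\partial_t$ and $Y_j=\partial_{y_j}-2x_j\partial_t$ and viewing $W$ as a Euclidean field on $\R^{2n+1}$, a direct computation shows that the $\partial_t$-contributions cancel, so that $\operatorname{div}_0W=\operatorname{div}_{\mathrm{Eucl}}W$; this is the manifestation of $X_j,Y_j$ being divergence-free. The classical divergence theorem then gives $\int_D\operatorname{div}_0W\,d\eta=\int_{\partial D}\langle W,\nu\rangle_{\mathrm{Eucl}}\,ds$, with $\nu=\nabla G/\|\nabla G\|$ the Euclidean unit normal. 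The key observation converting the flux is that for a horizontal $W$ one has $WG=\langle W,\nabla G\rangle_{\mathrm{Eucl}}=\sum_j(a_jX_jG+b_jY_jG)=\langle W,\nabla_0G\rangle_0$; combining this with the stated surface-element relation $d\sigma=\|\nabla_0G\|_0\,ds/\|\nabla G\|$ turns $\langle W,\nu\rangle_{\mathrm{Eucl}}\,ds$ into $\langle W,n_0\rangle_0\,d\sigma$, where $n_0=\nabla_0G/\|\nabla_0G\|_0$. This yields the horizontal divergence theorem.

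With this in hand I would specialize to $W=v\nabla_0u=\sum_j\{v(X_ju)X_j+v(Y_ju)Y_j\}$. The boundary integrand becomes $\langle v\nabla_0u,n_0\rangle_0=v\,\langle\nabla_0u,n_0\rangle_0=v\,\partial^\perp u$, reproducing the left-hand side. For the bulk term, the Leibniz rule $X_j(v\,X_ju)=(X_jv)(X_ju)+v\,X_j^2u$ (and likewise in $Y_j$) gives $\operatorname{div}_0(v\nabla_0u)=\sum_j\{(X_jv)(X_ju)+(Y_jv)(Y_ju)\}+v\sum_j(X_j^2+Y_j^2)u$; the first sum is exactly $\nabla_0v\cdot\nabla_0u$, while the second is $-v\textsl{L}u$, which the normalization $L_0=-\tfrac14\textsl{L}$ identifies with the term $v\,L_0u$ (up to the constant fixed by that normalization). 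Integrating over $D$ then produces the asserted identity.

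The step I expect to require the most care is the treatment of the characteristic set $\Sigma=\{\zeta\in\partial D:\nabla_0G(\zeta)=0\}$, where $n_0$, $\|\nabla_0G\|_0$ and hence $\frac{\partial}{\partial n_0}$ are undefined and the metric $M_0$ degenerates. The clean way around this is to apply the divergence theorem on $D\setminus U_\varepsilon$, where $U_\varepsilon$ shrinks to $\Sigma$, and pass to the limit: since $\Sigma$ is a lower-dimensional (measure-zero) subset of $\partial D$ and $u,v\in\mathcal{C}_*(D)$ so that $\partial^\perp u$ extends continuously across $\Sigma$, the boundary integral $\int_{\partial D}v\,\partial^\perp u\,d\sigma$ is well defined and the contribution of $\Sigma$ vanishes; one must also check that $d\sigma$ stays integrable near $\Sigma$. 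This is the only place where the sub-Riemannian geometry departs from the classical argument, everything else being the Euclidean Green identity transported through the correspondence $\operatorname{div}_0=\operatorname{div}_{\mathrm{Eucl}}$ on horizontal fields.
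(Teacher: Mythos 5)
You take essentially the same route as the paper: both proofs consist of applying a divergence theorem to the horizontal field $W=v\nabla_0u$ and expanding $\mathrm{div}_0(v\nabla_0u)$ by the Leibniz rule. The difference is where the divergence theorem comes from. The paper's proof is a one-line citation: a bounded domain with smooth boundary is an $\H$-Caccioppoli set, so the divergence theorem of Franchi--Serapioni--Serra Cassano (their Corollary 7.7) applies directly to $v\nabla_0u$. You instead prove the needed horizontal divergence theorem from scratch, reducing it to the Euclidean one via two correct observations: the fields $X_j,Y_j$ are Euclidean-divergence-free, so $\mathrm{div}_0W=\mathrm{div}_{\mathrm{Eucl}}W$ for horizontal $W$; and $WG=\langle W,\nabla G\rangle_{\mathrm{Eucl}}=\langle W,\nabla_0G\rangle_0$ for horizontal $W$, which combined with $d\sigma=\|\nabla_0G\|_0\,\|\nabla G\|^{-1}\,ds$ converts the Euclidean flux into $\langle W,n_0\rangle_0\,d\sigma$. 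Your version is self-contained and elementary where the paper's rests on geometric measure theory in Carnot groups; the paper's citation, on the other hand, is shorter and covers much rougher sets than you need here.

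Two refinements to your write-up. First, the constant: your explicit computation surfaces something the citation hides, namely that with the paper's literal conventions ($X_j,Y_j$ orthonormal and $L_0=-\frac{1}{4}\textsl{L}$) the bulk term is $v\sum_j(X_j^2+Y_j^2)u=4\,vL_0u$, so the identity as printed is exact only for the rescaled frame $\{\frac{1}{2}X_j,\frac{1}{2}Y_j\}$, whose associated horizontal Laplacian is precisely $L_0$; your parenthetical ``up to the constant fixed by that normalization'' flags this honestly, but a finished proof must either carry the factor $4$ or fix the frame (the mismatch is inherited from the paper's conventions, not introduced by you). Second, the characteristic set: your excision-and-limit argument is unnecessary, and your appeal to $u,v\in C_*(D)$ imports a hypothesis the lemma does not make (it assumes only $u,v\in C^1(\bar D)$). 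The flux density you derived, $\langle W,\nabla_0G\rangle_0/\|\nabla G\|$ with respect to the Euclidean surface element $ds$, is continuous on all of $\partial D$ and vanishes at characteristic points because $\nabla_0G$ does; the integral $\int_{\partial D}v\,\partial^\perp u\,d\sigma$ is to be read with exactly this density, so the Euclidean divergence theorem applies in one step on all of $D$, with no limiting procedure and no separate integrability check for $d\sigma$ near the characteristic set. This is also how the perimeter measure in the cited Franchi--Serapioni--Serra Cassano theorem behaves, so once unwound the two proofs agree on this point as well.
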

\begin{proof}
Since $D$ has boundary of class $C^1$ (and hence $D$ is a $\H$-Caccioppoli set(\cite{fsc}, Definition 2.11)), so, Green's first identity is just the Divergence theorem (\cite{fsc}, Corollary 7.7) applied to the vector field $v\nabla_0 u$.

\end{proof}
\begin{lem} \label{x}
If $u$ is non constant on a connected domain $D$ then $\nabla_0 u$ is non zero on some open subset $U$ of $D$.
\end{lem}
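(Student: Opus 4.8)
The plan is to argue by contraposition, reducing to a rigidity statement: I will show that if $\nabla_0 u$ is not nonzero on any open subset of $D$, then $u$ must be constant. The first observation makes this reduction clean. Since the coefficient functions $X_j u$ and $Y_j u$ of the horizontal gradient $\nabla_0 u = \sum_j \{(X_j u)X_j + (Y_j u)Y_j\}$ are continuous (our functions are at least $C^1$), if $\nabla_0 u(\zeta_0) \neq 0$ at even a single point $\zeta_0 \in D$, then by continuity $\nabla_0 u$ is nonzero throughout an entire neighbourhood of $\zeta_0$, which already furnishes the desired open set $U$. Hence it suffices to prove the contrapositive in its strong form: if $\nabla_0 u$ vanishes at \emph{every} point of $D$, then $u$ is constant.

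So assume $\nabla_0 u \equiv 0$ on $D$, which is equivalent to $X_j u \equiv 0$ and $Y_j u \equiv 0$ on $D$ for every $1 \le j \le n$. The key step is to recover the missing direction $T = \partial_t$ from the horizontal fields by means of the Heisenberg commutation relation. A direct computation with $X_j = \partial_{x_j} + 2y_j\partial_t$ and $Y_j = \partial_{y_j} - 2x_j\partial_t$ yields $[X_j, Y_j] = -4\,\partial_t = -4T$. Applying this to $u$ and using that $X_j u$ and $Y_j u$ are identically zero on the open set $D$, I obtain $-4\,Tu = [X_j,Y_j]u = X_j(Y_j u) - Y_j(X_j u) = 0$, so that $Tu \equiv 0$ as well.

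At this point every frame vector field annihilates $u$. Inverting the frame through $\partial_{x_j} = X_j - 2y_j T$, $\partial_{y_j} = Y_j + 2x_j T$ and $\partial_t = T$, I conclude that every Euclidean partial derivative of $u$ vanishes on $D$. Therefore $u$ is locally constant, and since $D$ is connected it is globally constant, which completes the contrapositive and hence the lemma.

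The step I expect to be the main obstacle, and the one to handle with care, is the regularity bookkeeping in the commutator step: evaluating $[X_j, Y_j]u$ requires second derivatives, so the argument as written presupposes $u \in C^2$ (which holds in our applications, where $u \in C^2(D)$). If one wishes to assume only $u \in C^1$, the cleaner substitute is to bypass the bracket and instead invoke the bracket-generating (Hörmander) property of $\{X_j, Y_j\}$ directly: by the Chow--Rashevskii theorem any two points of the connected open set $D$ can be joined by a horizontal path lying in $D$, and since $\frac{d}{ds}u(\gamma(s)) = \langle \nabla_0 u, \dot\gamma \rangle_0 = 0$ along any such path, $u$ is forced to be constant. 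Either route closes the argument, the commutator version being the more elementary and the Chow--Rashevskii version demanding less smoothness of $u$.
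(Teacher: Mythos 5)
Your proof is correct, but it takes a genuinely different route from the paper's. The paper argues by contradiction with differential-geometric machinery: assuming $\nabla_0 u \equiv 0$, it regards the level sets $S_c=\{u=c\}$ as a foliation of $D$ by hypersurfaces, observes that each leaf is tangent to the horizontal distribution $\mathrm{span}\{X_j,Y_j\}$ (since $X_ju=Y_ju=0$ forces the kernel of $du$ to be exactly the horizontal space at regular points), and then invokes the Frobenius theorem: a distribution admitting an integral hypersurface through every point must be involutive, contradicting $[X_j,Y_j]=-4T$. You instead apply that same commutation relation directly to $u$: from $X_ju\equiv Y_ju\equiv 0$ you get $-4Tu=[X_j,Y_j]u=0$, and then inverting the frame kills all Euclidean partials, so $u$ is locally constant and hence constant on the connected set $D$. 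Both arguments pivot on the single identity $[X_j,Y_j]=-4T$, but yours is more elementary and in one respect tighter: the paper's foliation claim is only legitimate near regular points of $u$ (level sets through critical points need not be hypersurfaces), a genericity issue it glosses over, whereas your computation requires no such care. Your regularity bookkeeping is also apt: the bracket step needs $u\in C^2$, which is harmless here because the lemma is applied to differences of solutions lying in $C_*(D)\subset C^2(D)$, and your Chow--Rashevskii alternative covers the $C^1$ case. What the paper's route buys in exchange is the conceptual picture --- nonintegrability of the horizontal distribution is \emph{why} no nonconstant function can have vanishing horizontal gradient --- at the cost of quoting a heavier theorem.
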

\begin{proof}
Suppose $\nabla_0 u$ is zero everywhere. Consider the level sets $S_c=\{[z,t]\in \H_n:u([z,t])=c,\;\; c\in \R\}$. These would give a foliation of $D$ by hypersurfaces. Since $\nabla_0 u$ is zero at every point, the Euclidean gradient $\nabla u$ is contained in $\text{span} \{T\}$ and consequently tangent space at any point is in span$\{X_j,Y_j\}$. Therefore span$\{X_j,Y_j\}$ is a distribution on $E$ which has an integral surface through any point $p$ viz $S_{u(p)}$. Thus, by Frobenius theorem (\cite{tay}, Theorem 9.4) span$\{X_j,Y_j\}$ must be closed under bracket operation which is a contradction as $$[X_j, Y_j]=-4T\;\;\;\; \text{for}\;\; 1\leq j \leq n.$$\\ Hence the lemma.    
\end{proof}
\begin{thm}
Let $D$ be a domain with smooth boundary. Two solutions of the interior Neumann problem (\ref{7}) can differ only by a constant.
\end{thm}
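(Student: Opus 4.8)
The plan is to reduce to the homogeneous problem and then exploit Green's first identity together with Lemma \ref{x}. First I would take two solutions $u_1,u_2\in C_*(D)$ of (\ref{7}) and set $w=u_1-u_2$. By linearity of $L_0$ and of the operator $\partial^\perp$, the difference $w$ satisfies $L_0 w=0$ in $D$ and $\partial^\perp w=0$ on $\partial D$ (the latter at every non-characteristic point, and, by the limit definition of $\partial^\perp$, at the characteristic points as well).

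Next I would feed $w$ into Green's first identity (Lemma \ref{y}) taking $u=v=w$, which gives
$$\int_{\partial D} w\,\partial^\perp w\,d\sigma = \int_D\left(w\,L_0 w + \nabla_0 w\cdot\nabla_0 w\right)d\eta.$$
The left-hand side vanishes because $\partial^\perp w=0$ on $\partial D$ away from the measure-zero characteristic set, and the first term on the right vanishes because $L_0 w=0$. Since $\nabla_0 w\cdot\nabla_0 w=\|\nabla_0 w\|_0^2=\sum_j\{(X_j w)^2+(Y_j w)^2\}\ge 0$, I am left with $\int_D\|\nabla_0 w\|_0^2\,d\eta=0$. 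The integrand being continuous and nonnegative, this forces $\nabla_0 w\equiv 0$ throughout $D$.

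Finally I would invoke the contrapositive of Lemma \ref{x}: since $D$ is connected and $\nabla_0 w$ is nowhere nonzero, $w$ cannot be non-constant, so $w$ is constant on $D$; equivalently $u_1$ and $u_2$ differ only by a constant, which is the assertion.

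The main obstacle is the legitimacy of applying Green's first identity, since Lemma \ref{y} is stated for functions in $C^1(\bar D)$ whereas a solution only lies in $C_*(D)\subset C^2(D)\cap C(\bar D)$ and need not be $C^1$ up to the boundary, in particular near the characteristic points. I expect to handle this by exhausting $D$ with smooth subdomains $D_k\uparrow D$ on which $w$ is smooth (by hypoellipticity of $L_0$), applying the identity on each $D_k$, and passing to the limit, using the existence of $\partial^\perp w$ guaranteed by membership in $C_*(D)$ and the negligibility of the characteristic set to control the boundary contributions.
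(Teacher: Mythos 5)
Your proposal is correct and follows essentially the same route as the paper: form the difference $w=u_1-u_2$, apply Green's first identity (Lemma \ref{y}) with $u=v=w$ to conclude $\int_D\|\nabla_0 w\|_0^2\,d\eta=0$, and then invoke Lemma \ref{x} to get that $w$ is constant. In fact you go a step further than the paper by flagging the regularity mismatch (Lemma \ref{y} assumes $C^1(\bar D)$ while solutions only lie in $C_*(D)$) and sketching an exhaustion argument to bridge it, a point the paper's proof passes over in silence.
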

\begin{proof}
The difference $u:=u_1-u_2$ of two solutions for the Neumann problem is a harmonic function continuous up to the boundary satisfying the homogeneous boundary condition $\partial^\perp u=0$ on $\partial D$. By Lemma \ref{y} applied to $D$, we derive
$$\int_D{|\nabla_0 u|^2\; d\eta}= -\int_{D}{u(L_0u) d\eta}+\int_{\partial D}{u\partial^\perp u d\sigma}=0.$$Thus $\nabla_0 u=0$ on $D$, so by Lemma \ref{x}, $u$ must be constant.
\end{proof}
\subsection{Surface Potentials}
From now onwards, $B$ will denote the Kor\'{a}nyi ball in $\H_n$, i.e, $B=\{\xi=[z,t]\in \H_n:\; N(\xi)<1\}.$
\begin{defn}
Given a function $\phi \in C(\partial B)$, for $\eta \in \H_n\setminus \partial B, $ the functions
\beg \label{15}
m(\eta):=\int_{\partial B}{\phi(\xi)\;g_{\eta}(\xi)\;d\sigma(\xi)},
\eeg
and
\beg \label{16}
v(\eta):=\int_{\partial B}{\phi(\xi)\;\partial^\perp  g_{\eta}\;d\sigma(\xi)},
\eeg
are called, respectively, single-layer and double-layer potential with density $\phi$. Both potentials are $L_0$-harmonic.
\end{defn}
In the following results, we determine the limiting values of single layer and double layer potentials. To that end, for $\eta\in \partial B$, we define $$v_{\pm}(\eta)=\lim_{h \rightarrow +0}v(\eta \pm h n_0(\eta)),$$ and,
$$\partial^\perp m_{\pm}(\eta)=\lim_{h \rightarrow +0}n_0(\eta). \nabla m(\eta \pm h n_0(\eta)).$$ The limit is to be understood in the sense of uniform convergence on compact subsets of $\partial B$.
\begin{lem}\label{**}
For continuous density $\phi$ on $\partial B$, 
\beg \label{*}
\int_{\partial B}{\partial^\perp g_\eta(\xi)\phi(\xi) d\sigma(\xi)},
\eeg exist for each $\eta \in \partial B.$
\end{lem}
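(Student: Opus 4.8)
The plan is to localise the integral near the pole $\xi=\eta$ and then to show that the singularity there is integrable against the surface measure $d\sigma$. First I would split $\partial B$ into the part $\{N(\eta^{-1}\xi)\ge\delta\}$, on which $g_\eta$ is smooth so that $\partial^\perp g_\eta(\xi)$ is continuous and bounded and the contribution is finite by continuity of $\phi$ and compactness of $\partial B$, and the small gauge-cap $\{N(\eta^{-1}\xi)<\delta\}$. One must also observe that any characteristic point of $\partial B$ lying in the first part is harmless: by Cauchy--Schwarz
\[
|\partial^\perp g_\eta(\xi)|=\frac{|\langle\nabla_0G(\xi),\nabla_0g_\eta(\xi)\rangle_0|}{\|\nabla_0G(\xi)\|_0}\le\|\nabla_0g_\eta(\xi)\|_0,
\]
so the normalisation never produces a blow-up, and moreover $d\sigma=\|\nabla_0G\|_0\|\nabla G\|^{-1}ds$ vanishes at such points. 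Thus everything reduces to the behaviour on the cap.

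Second, I would record the homogeneous size of the kernel. Left-translating by $\eta$ (legitimate since $g_\eta(\xi)=g_e(\eta^{-1}\xi)$ and the fields $X_j,Y_j$ are left invariant) moves the pole to $e$. Since $g_e$ is homogeneous of degree $-2n$ for the dilations $\delta_r[z,t]=[rz,r^2t]$ and each horizontal field lowers homogeneity by one, the crude bound is $\|\nabla_0g_\eta(\xi)\|_0\lesssim N(\eta^{-1}\xi)^{-(2n+1)}$. This is exactly borderline, since a non-characteristic hypersurface is $(2n+1)$-dimensional in the gauge sense, i.e.
\[
\sigma\bigl(\partial B\cap\{N(\eta^{-1}\xi)<\varepsilon\}\bigr)\lesssim\varepsilon^{2n+1},
\]
so a dyadic sum of $N^{-(2n+1)}$ against $d\sigma$ diverges. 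A pointwise size bound therefore does not suffice, and a genuine \emph{cancellation} must be extracted.

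The heart of the proof, and the step I expect to be the main obstacle, is to gain one power of the gauge, namely
\[
|\partial^\perp g_\eta(\xi)|=\frac{|\langle\nabla_0G(\xi),\nabla_0g_\eta(\xi)\rangle_0|}{\|\nabla_0G(\xi)\|_0}\lesssim N(\eta^{-1}\xi)^{-2n}
\]
for $\xi\in\partial B$ near $\eta$. This is the Heisenberg analogue of the classical double-layer estimate $\langle\eta-\xi,n_\xi\rangle=O(|\eta-\xi|^2)$: the level sets of $g_\eta$ are the gauge spheres centred at $\eta$, so $\nabla_0g_\eta(\xi)$ points in the gauge-radial direction out of $\eta$; as both $\eta$ and $\xi$ lie on $\partial B$ and $\partial B$ is of class $C^2$, this radial direction is nearly tangential to $\partial B$ at $\xi$, whence its contraction with the horizontal normal $\nabla_0G/\|\nabla_0G\|_0$ is smaller than the crude bound by a factor comparable to $N(\eta^{-1}\xi)$. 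Making this rigorous calls for a second-order expansion of $G$ along the surface in the anisotropic dilation coordinates, together with the $C(\eta,\xi),P(\eta,\xi)$ representation of $g_\eta$ to control $\nabla_0g_\eta$. The delicate issues are the uniformity of the gain as $\xi\to\eta$ and, more seriously, the case in which $\eta$ is one of the characteristic points $[0,\pm1]$, where $\|\nabla_0G\|_0$ itself degenerates and the tangency argument has to be balanced against the simultaneous vanishing of $d\sigma$.

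Granting the improved bound, the conclusion follows at once. Decomposing the cap into dyadic shells $\{2^{-k-1}\le N(\eta^{-1}\xi)<2^{-k}\}$ and inserting the two displayed estimates gives
\[
\int_{\partial B\cap\{N(\eta^{-1}\xi)<\delta\}}|\partial^\perp g_\eta(\xi)|\,|\phi(\xi)|\,d\sigma(\xi)\lesssim\|\phi\|_\infty\sum_{k\ge0}2^{2nk}\,2^{-(2n+1)k}=\|\phi\|_\infty\sum_{k\ge0}2^{-k}<\infty,
\]
so that the integral in $(\ref{*})$ converges absolutely for every $\eta\in\partial B$, which is the assertion of the lemma.
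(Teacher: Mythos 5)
Your opening analysis is correct, and it is in fact sharper than the paper's own proof: the homogeneity bound $|\partial^\perp g_\eta(\xi)|\lesssim N^{-(2n+1)}$ is exactly the paper's estimate (\ref{18}), and your observation that the cap $\partial B\cap\{N(\eta^{-1}\xi)<\varepsilon\}$ has measure $\approx\varepsilon^{2n+1}$ at a non-characteristic $\eta$, so that this bound alone yields only a logarithmically divergent majorant, is right. The paper never confronts this point: its polar-coordinate display $\int_0^{2\pi}d\phi\int_{-\pi/2}^{\pi/2}\cos^{1/2}\alpha\,d\alpha\int_0^R r\,dr$ is the integral of $|z|N(\xi)^{-3}$ against the volume element $r^3\,dr\,d\alpha\,d\phi$ of the \emph{solid} gauge ball in $\H_1$ (three free parameters), not against the two-dimensional surface measure of the cap $B_\eta(R)$, so the paper's proof evades rather than resolves the difficulty you identified.

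The genuine gap is that the estimate you declare to be the heart of the proof, $|\partial^\perp g_\eta(\xi)|\lesssim N^{-2n}$ for $\xi,\eta\in\partial B$, is false, so your plan cannot be completed; indeed no argument based on absolute values can work, because the integral in (\ref{*}) is not absolutely convergent at non-characteristic points. Take $n=1$, $\eta=[1,0]$, and write $\xi=[\sqrt{\cos\beta}\,e^{i\theta},\sin\beta]\in\partial B$, so $\eta$ corresponds to $(\theta,\beta)=(0,0)$. Setting $w:=C(\eta,\xi)-P(\eta,\xi)$ and $N:=|w|^{1/2}$ (the gauge distance to the pole, since $g_\eta=a_0|w|^{-1}$), the formula $\partial^\perp=\frac1{|z|}(\bar AE+A\bar E)$ gives exactly
\[
\partial^\perp g_\eta(\xi)=-\frac{2a_0}{|z|}\,\frac{\Re\bigl[(|z|^2\bar A-A\bar z)\,w\bigr]}{|w|^{3}},\qquad A=|z|^2+it,
\]
and Taylor expansion yields $\Re\bigl[(|z|^2\bar A-A\bar z)w\bigr]=2\theta^2-5\theta\beta+2\beta^2+O\bigl((|\theta|+|\beta|)^3\bigr)$, while $w=\theta^2+i\sigma$ to leading order, where $\sigma:=\beta-2\theta$. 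The numerator thus vanishes to second order in the \emph{Euclidean} distance --- which is all your tangency analogy of $\langle\eta-\xi,n_\xi\rangle=O(|\eta-\xi|^2)$ can ever give --- but in the adapted coordinates it equals $3\theta\sigma+2\sigma^2$, and on the parabolic set $\sigma\approx\theta^2$ this is $\approx3|\theta|^3\approx3N^3$, not $O(N^4)$: there the kernel has the full critical size $N^{-3}=N^{-(2n+1)}$, so no power is gained. The leading term alone then contributes
\[
\int\!\!\int\frac{|\theta\sigma|}{(\theta^4+\sigma^2)^{3/2}}\,d\sigma\,d\theta\;\approx\;\int_0^\delta\frac{d\theta}{\theta}\;=\;\infty,
\]
so the integral can exist at such $\eta$ only as a principal value, by exploiting the oddness of $3\theta\sigma(\theta^4+\sigma^2)^{-3/2}$ in $\sigma$ --- a cancellation argument of singular-integral type. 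This is a structural feature of the Heisenberg setting: the double-layer kernel on the gauge sphere is genuinely singular, not weakly singular as in the Euclidean case, so the classical gain you hoped to transplant simply is not there. Incidentally, the case you single out as the most delicate is the harmless one: at the characteristic points $\eta=[0,\pm1]$ one has $P\equiv0$ and the exact identity $\partial^\perp g_\eta(\xi)=-na_0|z|\,N^{-2n}$, which is absolutely integrable against $d\sigma$; the obstruction lives precisely at the non-characteristic points you expected to control by tangency.
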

\begin{proof}
From \cite{kori} $$\partial^\perp=\frac{1}{|z|}(\bar{A}E+A\bar{E}),\; \text{for all points where}\; |z|\neq 0,$$ where $E=\sum{z_j Z_j}$ and $A=|z|^2+it$.
For $\xi=[z,t]$ and $\eta=[z',t'] \in \H_n $ and $|z| \neq 0$ we have 
\ben
\frac{\partial}{\partial n_0} g_\eta(\xi)&=&-n \frac{a_0}{|z|}(N(\xi^{-1}\eta))^{(-2n-2)}[4|z|^6+2|z|^2|z'|^4+8|z|^2z^2\bar{z'}^2+4|z|^4|z'|^2 \\
&& -4z\bar{z'}|z|^2|z'|^2-12|z|^4z\bar{z'}-8iz^2\bar{z'}^2t-4iz\bar{z'}|z'|^2t-12i|z|^2z\bar{z'}t\\
&&+4|z|^2t^2-4|z|^2tt'],
\een and $$\lim_{|z| \rightarrow 0}\frac{\partial}{\partial n_0} g_\eta(\xi)=0,$$ so, $g_\eta(\xi) \in \emph{C}_*(B)$ and 
\beg \label{18}
\left|\partial^\perp g_\eta(\xi)\right| \leq C_1|a_0|.n(N(\xi^{-1}\eta))^{(-2n-1)},\;\;\;\eta \neq \xi,
\eeg for some constant $C_1 >0.$\\
For each $\eta \in \partial B$, let $B_\eta(R)=\{\xi \in \partial B: N(\xi \eta^{-1}) \leq R\}$, $R \in (0,1].$ $B_\eta(R)$ can be projected bijectively onto the tangent plane to $\partial B$ at the point $\eta$.\\ Since measure is translation invariant, it is enough to estimate the integral in (\ref{*}) with $g_e(\xi).$ For this we introduce polar coordinates. For the sake of brevity, we shall do it for $\H_1$ only but everything holds well in $\H_n$. In $\H_1$ the polar coordinates $r,\; \phi,\;\alpha$ of a point $\xi=[\rho e^{i\theta},t]$ as in \cite{kori} are given by
\ben
\rho&=& r\cos^{{1}/{2}}\alpha,\\
t&=&r^2 \sin \alpha,\\
\theta&=& \phi+\tan \alpha \log \frac{r}{a},
\een
with fixed $a$. We can write
\ben
\left|\int_{B_\epsilon(R)}{\partial^\perp g_e(\xi)\phi(\xi) d\sigma(\xi)}\right|&\leq& \frac{C_1 |a_0|\;n\left\|\phi\right\|_\infty}{\left\|\nabla N\right\|}\int_{B_\epsilon(R)}{|z|N(\xi)^{-2n-1}ds(\xi)}\\
&=& \frac{C_1 |a_0|\;n\left\|\phi\right\|_\infty}{\left\|\nabla N\right\|} \int_0^{2\pi}{d\phi}\int_{{-\pi}/{2}}^{{\pi}/{2}}{|\cos^{{1}/{2}}\alpha|d\alpha}\int_0^R{rdr},
\een
where $B_\epsilon(R)$ is gauge ball with center at origin and radius $R$. Hence integral exist on $B_\eta(R)$. Furthermore, we have
\ben
\left|\int_{\partial B\setminus{B_\eta(R)}}{\partial^\perp g_\eta(\xi)\phi(\xi) d\sigma(\xi)}\right|&=& \frac{C_1 |a_0|\;n\left\|\phi\right\|_\infty}{\left\|\nabla N\right\|}\int_{\partial B\setminus{B_\eta(R)}}{R^{-2n-1}d\sigma(\xi)}\\
&\leq& \frac{C_1 |a_0|\;n\left\|\phi\right\|_\infty}{\left\|\nabla N\right\|}|\partial B|.
\een
Hence, for all $\eta \in \partial B$, the integral in (\ref{*}) exist.

\end{proof}
\begin{thm} \label{a}
The double-layer potential $v$ with continuous density $\phi$ can be continuously extended from $B$ to $\bar{B}$ and from $\H_n \setminus \bar{B}$ to $\H_n \setminus B$ with limiting values 
\beg \label{17}
v_{\pm}(\eta)=\int_{\partial B}{\phi(\xi) \partial^\perp  g_{\eta}(\xi)\;d\sigma(\xi)} \pm \phi(\eta),\;\; \eta\in \partial B.
\eeg
\end{thm}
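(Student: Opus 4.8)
The plan is to follow the classical potential-theoretic strategy of isolating the singular behaviour of the kernel at the target point and reducing everything to the Gauss integral for a constant density. Fix a non-characteristic point $\eta_0 \in \partial B$ and, for $\eta$ moving along the normal ray $\eta_0 \pm h\, n_0(\eta_0)$, decompose
\[
v(\eta)=\underbrace{\int_{\partial B}\bigl(\phi(\xi)-\phi(\eta_0)\bigr)\,\partial^\perp g_\eta(\xi)\,d\sigma(\xi)}_{=:w(\eta)}+\phi(\eta_0)\underbrace{\int_{\partial B}\partial^\perp g_\eta(\xi)\,d\sigma(\xi)}_{=:v_0(\eta)}.
\]
Here $v_0$ is the double-layer potential of the constant density $1$, while the density of $w$ vanishes at $\xi=\eta_0$; the point of the splitting is that $w$ will turn out to be continuous across $\partial B$ at $\eta_0$, so the entire jump is carried by the explicitly computable $v_0$.

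First I would compute $v_0$. Since $g_\eta$ is $L_0$-harmonic on $B\setminus\{\eta\}$, I would apply Green's formula (\ref{***}) with $u\equiv 1$ and $v=g_\eta$ on $B$ with a small gauge ball $\{N(\xi^{-1}\eta)<\varepsilon\}$ excised. The interior integral vanishes, so $\int_{\partial B}\partial^\perp g_\eta\,d\sigma$ equals the flux of $g_\eta$ through the small gauge sphere; letting $\varepsilon\to 0$ and using the defining property of the fundamental solution, this flux is a universal constant $c$. The same argument with no excision gives $v_0\equiv 0$ when $\eta\notin\bar B$. Thus $v_0(\eta)=c$ for $\eta\in B$, $v_0(\eta)=0$ for $\eta\notin\bar B$, and the direct (principal value) boundary value is $c/2$; tracking the normalization constant $a_0$ and the Kor\'anyi geometry, I expect $c=2$, which is exactly what the final $\pm\phi$ requires.

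Next I would prove that $w$ extends continuously to $\eta_0$ from both sides with value $w(\eta_0)=\int_{\partial B}(\phi(\xi)-\phi(\eta_0))\,\partial^\perp g_{\eta_0}(\xi)\,d\sigma(\xi)$. Given $\varepsilon>0$, choose $R$ so small that $|\phi(\xi)-\phi(\eta_0)|<\varepsilon$ on the cap $B_{\eta_0}(R)$; there the contribution is bounded by $\varepsilon\int_{B_{\eta_0}(R)}|\partial^\perp g_\eta|\,d\sigma$, which by the estimate (\ref{18}) together with the polar-coordinate computation of Lemma \ref{**} is uniformly bounded for $\eta$ on the normal segment through $\eta_0$ (this uniformity, surviving as $\eta$ crosses $\partial B$, is the crucial point and uses that the cap projects bijectively onto the tangent plane). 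On the complement $\partial B\setminus B_{\eta_0}(R)$ the kernel $\partial^\perp g_\eta(\xi)$ is smooth and uniformly continuous in $\eta$ near $\eta_0$, so that piece passes to the limit directly. Combining yields the one-sided limits of $w$ and their equality with $w(\eta_0)$.

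Finally I would reassemble. From inside, $v_+(\eta_0)=w(\eta_0)+c\,\phi(\eta_0)$, and from outside, $v_-(\eta_0)=w(\eta_0)$; substituting $w(\eta_0)=\int_{\partial B}\phi(\xi)\,\partial^\perp g_{\eta_0}(\xi)\,d\sigma(\xi)-\tfrac{c}{2}\phi(\eta_0)$ (obtained from $v_0(\eta_0)=c/2$) gives $v_\pm(\eta_0)=\int_{\partial B}\phi\,\partial^\perp g_{\eta_0}\,d\sigma\pm\tfrac{c}{2}\phi(\eta_0)$, which is the asserted formula once $c=2$. I expect the main obstacle to be the uniform integrability in the third step: one must show the bound on $\int_{B_{\eta_0}(R)}|\partial^\perp g_\eta|\,d\sigma$ holds uniformly as $\eta$ ranges over both sides of $\partial B$, not merely on $\partial B$ itself, and check that the characteristic points of the Kor\'anyi sphere (where $|z|=0$) do not spoil this, using that $\partial^\perp g_\eta\to 0$ there as established in Lemma \ref{**}.
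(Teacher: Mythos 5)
Your strategy is the same as the paper's: split $v$ into the Gauss integral of the constant density (your $v_0$, the paper's $w$) plus the part whose density vanishes at the target point (your $w$, the paper's $u$), prove the latter is continuous across $\partial B$ by a near/far splitting that uses the estimate (\ref{18}), Lemma \ref{**} and uniform continuity of $\phi$, and then read the jump off the Gauss integral. That much matches the paper's proof step for step.

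The genuine gap is in the constants, and here it is not cosmetic, because the whole content of the jump formula is the constant. Under the paper's normalization one has $L_0 g_\eta=\delta_\eta$ (the paper uses this explicitly when constructing the Neumann function), so the very Green identity (\ref{***}) that you invoke pins down your constant: for $\eta\in B$, $c=\int_B L_0 g_\eta\,dv=1$, not $2$; there is no freedom left to ``expect'' $c=2$. Consequently, since your $w$ (like the paper's $u$) is continuous across $\partial B$, the total jump is forced: $v_{\mathrm{in}}(\eta)-v_{\mathrm{out}}(\eta)=(G_{\mathrm{in}}-G_{\mathrm{out}})\,\phi(\eta)=c\,\phi(\eta)=\phi(\eta)$, so your argument can at best yield $v_{\mathrm{in/out}}=\int_{\partial B}\phi\,\partial^\perp g_\eta\,d\sigma\pm\tfrac12\phi$ and never the stated $\pm\phi$: no assignment of a value to the boundary (principal value) Gauss integral can change the total jump. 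Two further points. First, your claim that the direct boundary value is exactly $c/2$ is asserted, not proved; it needs a blow-up/symmetry argument at non-characteristic points of the Kor\'anyi sphere (note the paper instead asserts, via (\ref{***}), that this boundary value equals $1$, the full interior value, which is equally unjustified and incompatible with your $c/2$). Second, with the paper's definition $v_\pm(\eta)=\lim_{h\rightarrow+0}v(\eta\pm h\,n_0(\eta))$ and $n_0$ the outward normal, $v_+$ is the exterior limit, whereas your reassembly attaches the $+$ sign to the interior limit, so your conclusion contradicts the statement's labeling. In fairness, these defects mirror real inconsistencies in the paper itself (its proof never computes the interior and exterior Gauss values, and its claimed $w\equiv 1$ on $\partial B$ cannot produce $\pm\phi$ either); but a correct proof must fix the constant by an honest computation of the Gauss integral on all three regions, and your sketch does not do that.
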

\begin{proof}
By Lemma \ref{**} the integral exists for $\eta \in \partial B$ and represents a continuous function on $\partial B$. In a sufficiently small neighborhood $U$ of $\partial B$, we can represent almost all $\eta \in U$ uniquely in the form $\eta=\zeta+h n_0(\zeta),$ where $\zeta \in \partial B$ and $h \in [-h_0,h_0]$ for some $h_0 >0.$ Now, we write the double-layer potential $v$ with density $\phi$ in the form
$$v(\eta)=\phi(\zeta)w(\eta)+u(\eta),\;\;\;\;\;\; \eta=\zeta+h n_0(\zeta) \in U\setminus \partial B,$$
where,
\beg \label{19}
w(\eta)=\int_{\partial B}{\partial^\perp  g_{\eta}(\xi)d\sigma(\xi)},
\eeg
and
\beg \label{20}
u(\eta)=\int_{\partial B}{\{\phi(\xi)-\phi(\zeta)\}\partial^\perp  g_{\eta}(\xi)d\sigma(\xi)}.
\eeg
For $\eta \in \partial B$ i.e, $h=0$, the integral in (\ref{20}) exists and represents a continuous function on $\partial B$. By using (\ref{***}), we have $w(\eta)=1$, for $\eta \in \partial B,$ therefore, to establish the theorem it sufficies to show that $$\lim_{h \rightarrow 0}u(\zeta+h n_0(\zeta))=u(\zeta),\;\;\;\; \zeta \in \partial B,$$ uniformly on compact subsets of $\partial B$.\\ Denoting $\partial B(\zeta;r)=\partial B \cap B[\zeta;r],$ where $B[\zeta;r]=\{\xi \in \H_n:N(\xi^{-1}\zeta)\leq r\}.$ Take $r< N(\eta^{-1}\zeta)=\alpha$(say) for $r$ sufficiently small, $\alpha-r$ is lower bound of $N(\xi^{-1}\eta).$ Consider
\beg
\int_{\partial B(\zeta;r)}{\left|\partial^\perp  g_{\eta}(\xi)\right|d\sigma(\xi)}&\leq& C_1 \int_{\partial B(\zeta;r)}{(N(\xi^{-1}\eta))^{(-2n-1)}d\sigma(\xi)},\eta \neq \xi \nonumber\\
&\leq& C_1 \int_{\partial B(\zeta;r)}{\frac{1}{(\alpha-r)^{(2n+1)}}d\sigma(\xi)}\nonumber\\
&\leq& C_1 \frac{1}{(\alpha-1)^{(2n+1)}}|\partial B(\zeta;r)| \label{21}\\
&=& C_2 \text{(say)}.\nonumber
\eeg
From the mean value theorem we obtain, 
\ben
\left|\partial^\perp  g_{\eta}(\xi)- \partial^\perp  g_{\zeta}(\xi)\right|&\leq& C_3 N(\eta \zeta^{-1}).(\nabla)_{\zeta}\left(\partial^\perp  g_{\eta}(\xi)\right)\\
&\leq& C_4 \frac{N(\eta \zeta^{-1})}{(N(\zeta \xi^{-1}))^{2n+1}}.
\een
Hence we can estimate 
\beg \label{22}
\int_{\partial B\setminus \partial B(\zeta;r)}{\left|\partial^\perp  g_{\eta}(\xi)\right|d\sigma(\xi)}&\leq& C_5 \frac{N(\eta\zeta^{-1})}{r^{2n+1}},
\eeg
for some constant $C_5>0.$ Now we can combine (\ref{21}) and (\ref{22}) to show that
$$|u(\eta)-u(\zeta)| \leq C \left\{\max_{B[\zeta;r]}|\phi(\xi)-\phi(\zeta)|+\frac{N(\eta \zeta^{-1})}{r^{2n+1}}\right\}$$ for some constant $C>0$ and all sufficiently small $r$.\\
Given $\epsilon>0$ we can choose $r>0$ such that $$\max_{\xi \in B[\zeta;r]}|\phi(\xi)-\phi(\zeta)| \leq \frac{\epsilon}{2C},$$ for all $\zeta \in \partial B,$ since $\phi$ is uniformly continuous on $\partial B$. Then taking, \\ $\delta < \frac{\epsilon r^{2n+1}}{2C}$, we see that $|u(\eta)-u(\zeta)|< \epsilon$ for all $N(\eta \zeta^{-1})< \delta$.
\end{proof}
\begin{thm} \label{b}
For a single-layer potential $m$ with continuous density $\phi$, we have
$$\partial^\perp  m_{\pm}(\eta)=\int_{\partial B}{\phi(\xi)\;\partial^\perp  g_{\eta}(\xi)\;d\sigma(\xi)} \pm \phi(\eta).\;\;\; \eta \in \partial B.$$
\end{thm}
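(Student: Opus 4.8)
The plan is to follow the same pattern as the proof of Theorem~\ref{a}, but now working with the kernel $\partial^\perp_\eta g_\eta(\xi)$ obtained by differentiating the single-layer potential in its free variable. For $\eta=\zeta+h n_0(\zeta)\in U\setminus\partial B$ with $\zeta\in\partial B$, differentiation under the integral sign (legitimate since $\eta\notin\partial B$) gives $\partial^\perp m(\eta)=\int_{\partial B}\phi(\xi)\,\partial^\perp_\eta g_\eta(\xi)\,d\sigma(\xi)$. I would freeze the density at $\zeta$ and split $\partial^\perp m(\eta)=\phi(\zeta)\,\tilde w(\eta)+\tilde u(\eta)$, where $\tilde w(\eta)=\int_{\partial B}\partial^\perp_\eta g_\eta(\xi)\,d\sigma(\xi)$ and $\tilde u(\eta)=\int_{\partial B}\{\phi(\xi)-\phi(\zeta)\}\partial^\perp_\eta g_\eta(\xi)\,d\sigma(\xi)$, exactly mirroring the decomposition $v=\phi(\zeta)w+u$ used before.

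First I would dispose of the continuous part $\tilde u$. The estimate (\ref{18}) bounds $|\partial^\perp g_\eta(\xi)|$ by $C N(\xi^{-1}\eta)^{-2n-1}$, and since $g_\eta(\xi)=g_e(\eta^{-1}\xi)$ depends only on the gauge $N(\eta^{-1}\xi)$, which satisfies $N(g)=N(g^{-1})$, the same bound (together with the existence asserted in Lemma~\ref{**}) holds for the $\eta$-derivative kernel and is symmetric under $\eta\leftrightarrow\xi$. Consequently the splitting into $\partial B(\zeta;r)$ and its complement, the mean-value estimate for $|\partial^\perp_\eta g_\eta(\xi)-\partial^\perp_\eta g_\zeta(\xi)|$, and the uniform continuity of $\phi$ carry over verbatim from the proof of Theorem~\ref{a}, yielding $\lim_{h\to0}\tilde u(\zeta+h n_0(\zeta))=\tilde u(\zeta)$ uniformly on compact subsets of $\partial B$.

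The crux is the jump of the unit-density term $\tilde w$, and this is where the non-commutativity of $\H_n$ is the main obstacle. In Theorem~\ref{a} the companion quantity $w(\eta)=\int_{\partial B}\partial^\perp_\xi g_\eta(\xi)\,d\sigma(\xi)$ could be evaluated by Gaveau's Green formula (\ref{***}) precisely because the derivative fell on the integration variable $\xi$; here it falls on the free variable $\eta$, so (\ref{***}) does not apply directly. My plan is to exploit the symmetry $g_\eta(\xi)=g_\xi(\eta)$ coming from $N(g)=N(g^{-1})$ together with the left-invariance identity $\nabla_{0,\xi}g_\eta(\xi)=(\nabla_0 g_e)(\eta^{-1}\xi)$, and to show that the discrepancy between the two normal derivatives $\partial^\perp_\eta g_\eta(\xi)$ and $\partial^\perp_\xi g_\eta(\xi)$ is one order less singular than either term separately. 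Because the adjoint action of the group contributes only lower-order factors (the vanishing of $n_0(\eta)-n_0(\xi)$ as $\xi\to\eta$ along $\partial B$ playing the role it does classically), I expect this discrepancy to be weakly singular, bounded by a constant times $N(\eta^{-1}\xi)^{-2n}$, hence to contribute a kernel whose potential extends continuously across $\partial B$.

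Granting that estimate, $\tilde w$ differs from $w$ only by a function continuous across the boundary, so it inherits the boundary jump of $w$, up to the sign fixed by the relation above. Combining this with the Gauss-type value $w=1$ on $\partial B$ established in the proof of Theorem~\ref{a}, I would obtain $\tilde w_{\pm}(\eta)=\int_{\partial B}\partial^\perp g_\eta(\xi)\,d\sigma(\xi)\pm 1$, and reassembling $\partial^\perp m_{\pm}(\eta)=\phi(\eta)\tilde w_{\pm}(\eta)+\tilde u(\eta)$ gives the claimed formula $\partial^\perp m_{\pm}(\eta)=\int_{\partial B}\phi(\xi)\,\partial^\perp g_\eta(\xi)\,d\sigma(\xi)\pm\phi(\eta)$. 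I expect the verification that the normal-derivative discrepancy is genuinely weakly singular — controlling the non-commutative correction terms in the explicit expression from Lemma~\ref{**} and pinning down the sign of the jump — to be the one genuinely delicate step; the continuity of $\tilde u$ and the final assembly are routine given Theorem~\ref{a}.
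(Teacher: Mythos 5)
Your proposal rests on exactly the same mechanism as the paper's proof --- the near-cancellation between the kernel $\partial^\perp_\eta g_\eta(\xi)$ and the double-layer kernel $\partial^\perp_\xi g_\eta(\xi)$, followed by an appeal to Theorem \ref{a} --- but it packages that mechanism differently. The paper does not freeze the density or isolate a Gauss-type integral: it introduces the double-layer potential $v$ with the \emph{same} density $\phi$, writes the single identity
\[
n_0(\zeta)\cdot\nabla m(\eta)+v(\eta)=\int_{\partial B}\bigl\{n_0(\zeta)+n_0(\xi)\bigr\}\cdot\nabla_\xi g_\eta(\xi)\,\phi(\xi)\,d\sigma(\xi),\qquad \eta=\zeta+h\,n_0(\zeta),
\]
argues by the Theorem \ref{a}-type estimates that the right-hand side extends continuously across $\partial B$, and then obtains $\partial^\perp m_\pm$ by subtracting the known limits $v_\pm$ of Theorem \ref{a}. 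You instead re-run the frozen-density decomposition from inside the proof of Theorem \ref{a} and confine the cancellation to the unit-density term $\tilde w$. Both routes need the same two ingredients (the continuity machinery of Theorem \ref{a} and the weak singularity of the combined kernel), but the paper's arrangement is more economical: Theorem \ref{a} is cited wholesale, for an arbitrary continuous density, so the continuity argument you re-derive for $\tilde u$ comes for free; your arrangement uses only the scalar by-products of Theorem \ref{a} ($w=1$ on $\partial B$ and the jump of $w$), which makes the dependence more elementary but the write-up longer.

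The substantive issue is the step you yourself flag and then defer: the relation between $\nabla_\eta g_\eta(\xi)$ and $\nabla_\xi g_\eta(\xi)$. This is not a technicality; it decides the sign in the statement. If, as in the Euclidean case, the usable relation is $\nabla_\eta g_\eta(\xi)\approx-\nabla_\xi g_\eta(\xi)$ modulo terms whose normal component is weakly singular, then it is $\tilde w+w$ (not $\tilde w-w$) that is continuous across $\partial B$, and your own computation then yields $\tilde w_\pm=\tilde w\mp 1$ on $\partial B$, hence $\partial^\perp m_\pm=\int_{\partial B}\phi\,\partial^\perp g_\eta\,d\sigma\mp\phi$ --- the opposite sign to the one you display. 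The paper evades this by asserting the sign-free identity $(\nabla)_\eta(g_\eta(\xi))=(\nabla)_\xi(g_\eta(\xi))$, but under that identity the combined kernel contains $\{n_0(\zeta)+n_0(\xi)\}\to 2n_0(\zeta)\neq 0$ as $\xi\to\zeta$, so no gain in the order of singularity occurs and the claimed continuity could not follow (a kernel of that order does jump --- that is precisely Theorem \ref{a}). On $\H_n$ the inversion $\xi\mapsto\xi^{-1}$ does not act as $-1$ on the horizontal frame, so neither clean identity is available, and the required estimate --- together with the sign it produces --- has to be extracted from the explicit formula for $\partial^\perp g_\eta$ in Lemma \ref{**}. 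In short: your plan is structurally sound and no thinner than the paper's own proof, but ``granting that estimate'' grants the entire content of the theorem, including the $\pm$ in its conclusion; neither you nor the paper actually establishes it.
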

\begin{proof}
let $v$ denote the double-layer potential with density $\phi$ and let $U$ be as in the proof of Theorem \ref{a}. Then for $\eta=\zeta+ h n_0(\zeta) \in U\setminus \partial B$, we can write
$$n_0(\zeta). \nabla (m(\eta))+v(\eta)=\int_{\partial B}{\{n_0(\zeta)+n_0(\eta)\}. \nabla_{\xi} (g_{\eta}(\xi))\;\phi(\xi)\;d\sigma(\xi)},$$ where we have made use of $(\nabla)_{\eta}(g_{\eta}(\xi))=(\nabla)_{\xi}( g_{\eta}(\xi))$.\\ Analogous to the single-layer potential in Theorem \ref{a}, the right hand side can be seen to be continuous in $U$. The proof can be now completed by applying Theorem \ref{a}.
\end{proof}
\begin{thm}\label{c}
A double-layer potential $v$ with continuous density $\phi$ satisfies
$$\lim_{h\rightarrow +0} n_0(\eta).\{\nabla v(\eta+h n_0(\eta))-\nabla v(\eta-h n_0(\eta))\}=0,$$ uniformly for all $\eta \in \partial B$.
\end{thm}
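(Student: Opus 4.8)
The plan is to localize at an arbitrary boundary point and to exploit an approximate reflection symmetry of the second-order kernel of $g_\eta$. Fix $\zeta\in\partial B$, and for small $h>0$ set $\eta_\pm=\zeta\pm h\,n_0(\zeta)$; writing $K_{\pm h}(\zeta,\xi)=n_0(\zeta)\cdot\nabla_\eta\bigl(\partial^\perp_\xi g_\eta(\xi)\bigr)\big|_{\eta=\eta_\pm}$, the quantity to control is $\int_{\partial B}\phi(\xi)\,\{K_h(\zeta,\xi)-K_{-h}(\zeta,\xi)\}\,d\sigma(\xi)$. The first move is to peel off the constant part of the density. Applying Gaveau's Green formula (\ref{***}) with $u\equiv1$ and $v=g_\eta$, and using that $g_\eta$ is the fundamental solution of $L_0$ with pole $\eta$, shows that the constant-density double-layer potential $w$ of (\ref{19}) is constant on each side of $\partial B$ (equal to $1$ on $B$ and to $0$ on $\H_n\setminus\bar B$), so that $\nabla w\equiv0$ off $\partial B$; this is the same computation already used to obtain $w=1$ on $\partial B$ in the proof of Theorem \ref{a}. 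Differentiating the decomposition $v=\phi(\zeta)w+u$ along the fixed normal line through $\zeta$ therefore gives $n_0(\zeta)\cdot\nabla v=n_0(\zeta)\cdot\nabla u$ on $U\setminus\partial B$, with $u$ the potential (\ref{20}) whose density $\psi_\zeta(\xi):=\phi(\xi)-\phi(\zeta)$ vanishes at the singular point $\xi=\zeta$.

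The key structural step is to show that $K_h(\zeta,\cdot)$ is, to leading order, even in $h$. Invoking the reciprocity $\nabla_\eta g_\eta(\xi)=\nabla_\xi g_\eta(\xi)$ recorded in the proof of Theorem \ref{b}, one rewrites $K_{\pm h}$ as the Hessian of $g_\eta$ in $\xi$ contracted against $n_0(\zeta)\otimes n_0(\xi)$, i.e. a normal-normal second derivative of the fundamental solution. In the model obtained by replacing $\partial B$ near $\zeta$ by its horizontal tangent plane, a direct computation, mimicking the Euclidean one in which $\partial_{\nu(x)}\partial_{\nu(y)}\Phi$ depends on the signed distance to the plane only through its square, shows this normal-normal derivative to be invariant under $h\mapsto-h$; the harmonicity $L_0g_\eta=0$ is precisely what ties the normal-normal component to the tangential ones and drives the cancellation. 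Consequently the most singular part of $K_h-K_{-h}$ vanishes.

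What survives is one order less singular than $K_{\pm h}$ itself and, near characteristic points, carries the same weight $|z|$ that rendered the double-layer kernel integrable in Lemma \ref{**}; thus $|K_h-K_{-h}|$ is dominated on a gauge cap $\partial B(\zeta;r)$ by an integrable multiple of $N(\xi^{-1}\zeta)^{-(2n+1)}$, with constants uniform in $\zeta$. Pairing this with $\psi_\zeta$, which satisfies $|\psi_\zeta(\xi)|\le\omega_\phi(r)$ on the cap by the uniform continuity of $\phi$, bounds the cap contribution by $C\,\omega_\phi(r)$. On $\partial B\setminus\partial B(\zeta;r)$ the kernel is smooth in $\eta$ and $K_h-K_{-h}\to0$ uniformly as $h\to+0$. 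Choosing $r$ small first, so that $C\,\omega_\phi(r)<\varepsilon/2$ uniformly in $\zeta$, and then $h$ small, yields $n_0(\zeta)\cdot\{\nabla v(\eta_+)-\nabla v(\eta_-)\}\to0$ uniformly for $\zeta\in\partial B$, which is the assertion.

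The main obstacle is the structural step. In the Euclidean case the reflection symmetry of the normal-normal kernel is exact and the curvature of $\partial B$ contributes only lower-order corrections; here one must make the symmetry precise for the anisotropic gauge $N$, keep track of the vertical direction $T$ and of the characteristic set, and verify that after cancellation the remainder genuinely gains one power of $N(\xi^{-1}\zeta)$ together with the $|z|$-weight needed for integrability near characteristic points. I expect the bulk of the work to be a careful Taylor expansion of $\nabla_\eta\bigl(\partial^\perp_\xi g_\eta(\xi)\bigr)$ about $\xi=\eta=\zeta$, rather than any soft argument.
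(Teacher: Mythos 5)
Your skeleton is, as far as one can tell, exactly what the paper intends: the paper's entire proof of Theorem \ref{c} is the single sentence that the argument is ``similar in structure to the proof of Theorem \ref{a}'', and your plan --- peel off the constant density using the fact that the constant-density potential $w$ of (\ref{19}) equals $1$ in $B$ and $0$ outside $\bar B$ (hence $\nabla w=0$ off $\partial B$), reduce to the density $\phi(\xi)-\phi(\zeta)$, then split into a gauge cap and its complement and use uniform continuity of $\phi$ --- is precisely that structure transplanted to normal derivatives. You have also correctly identified what the paper suppresses entirely: the kernel $K_{\pm h}$ is one order more singular than the double-layer kernel, so Theorem \ref{a}'s argument cannot transfer without a cancellation between the $+h$ and $-h$ kernels. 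In that sense your proposal is already more of a proof than the paper's.

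However, there is a genuine gap, and it sits exactly at your ``structural step''. First, the quantitative form in which you use the cancellation is false even in the Euclidean model you appeal to: there $K_{\pm h}\sim|x-y|^{-n}$, the difference obeys an $h$-\emph{dependent} bound of the type $|K_h-K_{-h}|\lesssim h\,|x-y|^2\,(|x-y|^2+h^2)^{-(n+2)/2}$, and its supremum over $h$ (attained at $h\sim|x-y|$) is of order $|x-y|^{1-n}$, whose integral over an $(n-1)$-dimensional surface diverges logarithmically. The cancellation gains one order, not two, so there is \emph{no} $h$-uniform pointwise majorant of double-layer order; what the classical proof (Kress, the paper's reference \cite{kres}) actually establishes is the $h$-uniform bound on the \emph{integral}, $\int_{\partial D}|K_h-K_{-h}|\,ds\le C$, and one may not interchange $\sup_h$ with the integral as your cap estimate implicitly does. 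Your argument survives only if rerun in that integrated form. Second, the cancellation itself in $\H_n$ --- which you defer to ``a careful Taylor expansion'' --- is the real content of the theorem and is nowhere established: $g_\eta(\xi)=g_e(\eta^{-1}\xi)$ depends on $\eta$ through a group product, so displacing $\eta$ to $\zeta\pm h\,n_0(\zeta)$ is not a reflection of the kernel's argument; the gauge is anisotropic in the $t$-direction; and at the characteristic points $[0,\pm1]$ of $\partial B$ the vector $n_0$ degenerates and the $|z|$-weight vanishes, so the uniformity over all of $\partial B$ asserted in the theorem needs a separate argument there. A smaller conceptual point: your attribution of the evenness in $h$ to harmonicity is misplaced --- in the Euclidean case it comes from the surface geometry, namely $(x-y)\cdot\nu(x)=O(|x-y|^2)$ for $x,y\in\partial D$, while harmonicity-based (Maue-type) identities require differentiable density, which you do not have. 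So: right skeleton, coinciding with the paper's (vacuous) indication, but the decisive kernel estimate is both mis-stated and unproven.
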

\begin{proof}
The proof is similar in structure to the proof of Theorem \ref{a}.
\end{proof}
\subsection{Neumann Boundary Value Problem: Existence}
We introduce two integral operators $K,K':C(\partial B)\rightarrow C(\partial B)$ by
$$(K \phi)(\eta):=\int_{\partial B} {\phi(\xi)\;(\partial^\perp  g_{\eta}(\xi))_\xi\;d\sigma(\xi)},\;\;\;\;\; \eta \in \partial B,$$ and
$$(K' \psi)(\eta):=\int_{\partial B} {\psi(\xi)\;(\partial^\perp  g_{\eta}(\xi))_\eta\;d\sigma(\xi)},\;\;\;\;\; \eta \in \partial B.$$
Using the estimate (\ref{18}), it can be easily shown that operators $K$ and $K'$ are compact. As seen by interchanging the order of integration, $K$ and $ K' $ are adjoint with respect to the dual system $\langle C(\partial B),C(\partial B)\rangle $ defined by $$ \langle \phi,\psi \rangle:=\int_{\partial B}{\phi \psi d\sigma},\;\;\;\; \phi, \psi \in C(\partial B). $$
\begin{thm} \label{d}
The null spaces of the operators $I+K$ and $I+K'$ have dimension one.
\end{thm}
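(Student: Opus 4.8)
The plan is to use Fredholm theory for the adjoint pair $(I+K, I+K')$. By the Fredholm alternative for compact operators in a dual system, the null spaces of $I+K$ and $I+K'$ have the same finite dimension, so it suffices to determine one of them. I would argue that this common dimension is exactly one by producing an explicit one-dimensional null space on one side and then showing there is nothing more.

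First I would show that the constant function lies in the null space of $I+K'$; equivalently, that the double-layer jump relations force a specific eigenfunction. Taking $\phi\equiv 1$, recall from the proof of Theorem~\ref{a} that $w(\eta)=\int_{\partial B}\partial^\perp g_\eta(\xi)\,d\sigma(\xi)=1$ for $\eta\in\partial B$, which is precisely the statement $(K\phi)(\eta)=\phi(\eta)$ when $\phi\equiv1$, i.e. $(I+K)(-1)=0$ or more cleanly that $K$ maps constants appropriately. I would carefully track the sign conventions in the kernel $(\partial^\perp g_\eta)_\xi$ versus $(\partial^\perp g_\eta)_\eta$ to identify which of $I\pm K$, $I\pm K'$ annihilates the constants. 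The natural candidate is that $1\in\ker(I+K)$ via the normalized total-flux identity coming from \eqref{***} applied to $u=1$, $v=g_\eta$, so that $\dim\ker(I+K)\geq 1$.

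For the reverse inequality I would suppose $\phi\in\ker(I+K)$, so $v_-(\eta)=(K\phi)(\eta)-\phi(\eta)=-2\phi(\eta)$ while by Theorem~\ref{a} the interior boundary value is $v_-=(K\phi)-\phi$; I would feed this into the interior problem. The function $v$ is $L_0$-harmonic in $B$ and, by the jump relations of Theorems~\ref{a} and \ref{c} together with Theorem~\ref{b}, the associated potential solves a homogeneous interior Neumann problem. Green's first identity (Lemma~\ref{y}) then gives $\int_B|\nabla_0 v|^2\,d\eta=0$, so by Lemma~\ref{x} the potential $v$ is constant in $B$; reading this constancy back through the jump relations pins $\phi$ down to a one-dimensional space of boundary densities. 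This is the standard potential-theoretic route: a nontrivial null element generates an interior solution which harmonicity plus the Neumann condition forces to be constant, and the constant solutions form a one-dimensional family.

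\emph{The hard part} will be the subelliptic bookkeeping at the \emph{characteristic points} of $\partial B$, where $\nabla_0 G$ vanishes and $n_0$ degenerates. Every jump relation and every application of Green's identity implicitly relies on integrals over $\partial B$ that must remain well-defined across the characteristic set; I would lean on Lemma~\ref{**} and the measure-zero nature of the characteristic points (noted in Section~2) to guarantee that the surface integrals and the equality $w\equiv1$ survive, and on the membership $g_\eta\in C_*(B)$ established in Lemma~\ref{**} so that $\partial^\perp$ is legitimately applied. The delicate step is justifying that passing from the jump relations to the interior Neumann problem does not lose a boundary contribution at these points; once that is secured, the reduction to Lemma~\ref{x} is routine and yields $\dim\ker(I+K)=\dim\ker(I+K')=1$.
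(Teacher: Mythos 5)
Your overall route---Fredholm duality to equate $\dim\ker(I+K)$ and $\dim\ker(I+K')$, then potential theory to identify the kernel with the constants---is the same as the paper's, but there is a genuine gap at the decisive step. For $\phi\in\ker(I+K)$, what the jump relation (\ref{17}) gives you is that the \emph{exterior} limit of the double-layer potential vanishes: $v_+=K\phi+\phi=0$ on $\partial B$. This is Dirichlet-type data for the exterior potential, not a Neumann condition for the interior one, and your assertion that ``by the jump relations of Theorems~\ref{a} and \ref{c} together with Theorem~\ref{b} the associated potential solves a homogeneous interior Neumann problem'' does not follow from those theorems. Theorem~\ref{c} only says that the interior and exterior normal derivatives of $v$ agree across $\partial B$; it yields $\partial^\perp v_-=0$ only after you already know $\partial^\perp v_+=0$. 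The missing idea---the hinge of the paper's proof---is the maximum principle applied on the exterior domain: $v$ is a bounded $L_0$-harmonic function on $\H_n\setminus\bar{B}$ with vanishing boundary values, hence $v\equiv 0$ on $\H_n\setminus\bar{B}$ (the paper cites \cite{bolaug} for this). Only then does $\partial^\perp v_+=0$ hold, whence Theorem~\ref{c} gives $\partial^\perp v_-=0$, uniqueness for the interior Neumann problem gives $v$ constant in $B$, and (\ref{17}) gives $\phi=\frac{1}{2}(v_+-v_-)$ constant. Without this exterior step, nothing in the jump relations converts the algebraic condition $K\phi=-\phi$ into Neumann data for the interior potential, and your reduction to Lemma~\ref{y} and Lemma~\ref{x} never gets off the ground.

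A secondary point: your lower bound $\dim\ker(I+K)\geq 1$ rests on the flux identity $w\equiv 1$ from the proof of Theorem~\ref{a}, which literally reads $K1=1$, i.e.\ $1\in\ker(I-K)$; you flag the sign ambiguity but leave it unresolved, asserting only that $1\in\ker(I+K)$ is the ``natural candidate.'' To be fair, the paper carries the same tension (its proof of Theorem~\ref{d} quotes the same identity and concludes $c_0+Kc_0=0$), and it traces back to the sign conventions in (\ref{17}); but a complete proof must fix these conventions once and verify on which of $I\pm K$ the constants are annihilated, since otherwise the statement could silently become one about $I-K$. By contrast, your worry about characteristic points is legitimate but is already disposed of by Lemma~\ref{**} and the definition of $\emph{C}_*(B)$; that is not where this proof is at risk.
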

\begin{proof}
Let $\phi$ be a solution of $\phi + K \phi=0$ and again define double layer potential $v$ as in (\ref{16}). Then by (\ref{17}), $v_+=K\phi+\phi=0$ on $\partial B$. $v(\eta)$ is a bounded harmonic function on $\H_n\setminus \bar{B},$ applying maximum principle \cite{bolaug} to $\H_n\setminus \bar{B},$ we get $v=0$ on $\H_n\setminus \bar{B}$. From Theorem \ref{c}, we see that $\partial^\perp v_-=0$ on $\partial B$ and from the uniqueness of the interior Neumann problem, it follows that $v$ is constant in $B$. From (\ref{17}) we deduce that $\phi$ is constant on $\partial B$. Therefore, Null space of $(I+K)$ $\subset$ span $\{c_0\}$ where $c_0$ is a constant and by using (\ref{***}), we get
$$\int_{\partial B}{\partial^\perp g_{\eta}(\xi)\;d\sigma(\xi)}=1,$$ so we have $c_0+Kc_0=0.$ Thus Null space of $(I+K)$ = span $\{c_0\}$. By First Fredholm theorem (\cite{kres}, Theorem 4.15), Null space of $(I+K')$ also has dimension one.  
\end{proof}
\begin{thm}
The single-layer potential $$m(\eta):=\int_{\partial B}{\psi(\xi)\;g_{\eta}(\xi)\;d\sigma(\xi)},\;\;\;\;\; \eta \in B$$ with continuous density $\psi$ is a solution of the interior Neumann problem (\ref{7}) provided that $\psi$ is a solution of the integral equation
$$\psi(\eta)+\int_{\partial B}{\psi(\xi)\;\partial^\perp g_{\eta}(\xi)\;d\sigma(\xi)}=g(\eta),\;\;\;\; \eta \in \partial B.$$
\end{thm}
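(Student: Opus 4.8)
The plan is to check directly that $m$ meets both requirements of the interior Neumann problem (\ref{7}), together with the regularity $m\in\emph{C}_*(B)$, using the surface-potential machinery already in place. The single-layer potential will be handled by differentiating under the integral sign in the interior and by invoking the jump relation of Theorem \ref{b} at the boundary.

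First I would verify that $L_0m=0$ throughout $B$. For $\eta\in B$ the pole of $g_\eta$ sits at $\eta\in B$, which never meets the integration locus $\partial B$; hence the integrand is smooth in $\eta$ on all of $B$ and differentiation under the integral sign is legitimate. Since $g_\eta(\xi)=g_e(\eta^{-1}\xi)=g_e(\xi^{-1}\eta)$ is, by left-invariance of $L_0$, annihilated by $L_0$ in the $\eta$-variable away from $\eta=\xi$, we obtain $L_0m(\eta)=\int_{\partial B}\psi(\xi)\,L_0^{\eta}g_\eta(\xi)\,d\sigma(\xi)=0$ for every $\eta\in B$. This is precisely the $L_0$-harmonicity of the single-layer potential recorded in the Definition preceding Lemma \ref{**}. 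The continuity of $m$ up to $\partial B$ and the existence of its boundary normal-derivative trace at every point of $\partial B$, including the characteristic ones, are furnished by Lemma \ref{**} and Theorems \ref{a}--\ref{b}, so $m\in\emph{C}_*(B)$.

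Next comes the boundary condition, which is the heart of the argument. Approaching $\partial B$ from within $B$, the relevant one-sided limit of the normal derivative is supplied by Theorem \ref{b}: for $\eta\in\partial B$,
$$\partial^\perp m(\eta)=\int_{\partial B}\psi(\xi)\,\partial^\perp g_\eta(\xi)\,d\sigma(\xi)+\psi(\eta)=(K'\psi)(\eta)+\psi(\eta).$$
Invoking the hypothesis that $\psi$ solves $\psi+K'\psi=g$ on $\partial B$ immediately collapses the right-hand side to $g(\eta)$, so that $\partial^\perp m=g$ on $\partial B$ and $m$ solves (\ref{7}).

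The only genuinely delicate point is the bookkeeping in this last step: one must select the correct one-sided limit in Theorem \ref{b}, namely the trace taken as $\eta$ tends to $\partial B$ from the interior of $B$ along $-n_0$, and pair it with the matching sign of the jump term so that the interior normal derivative reads $(K'\psi)(\eta)+\psi(\eta)$ rather than $(K'\psi)(\eta)-\psi(\eta)$. Once this orientation is fixed consistently with the outward horizontal normal $n_0$, the prescribed integral equation $\psi+K'\psi=g$ is nothing other than the boundary condition $\partial^\perp m=g$, and nothing beyond the estimates already established in Lemma \ref{**} and Theorems \ref{a}--\ref{b} is required.
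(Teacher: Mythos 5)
Your proposal follows the same route as the paper: the paper's entire proof of this theorem is the single sentence ``This follows from Theorem \ref{b}'', and you supply exactly the details that sentence suppresses (harmonicity of $m$ in $B$ by differentiation under the integral sign, membership in $\emph{C}_*(B)$, and the boundary condition via the jump relation). Those added verifications are fine as far as they go.

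The gap is in the step you yourself single out as delicate, and your resolution of it runs the wrong way relative to the paper's own conventions. By the paper's definition, $\partial^\perp m_{\pm}(\eta)=\lim_{h\to+0}n_0(\eta)\cdot\nabla m(\eta\pm hn_0(\eta))$ with $n_0$ the \emph{outward} horizontal normal, so the trace relevant to the interior problem is the minus case, and Theorem \ref{b} as stated gives $\partial^\perp m_-(\eta)=(K'\psi)(\eta)-\psi(\eta)$. Applying the cited theorem honestly therefore produces the integral equation $K'\psi-\psi=g$, not $\psi+K'\psi=g$; you assert the opposite pairing (``the interior normal derivative reads $(K'\psi)(\eta)+\psi(\eta)$'') by fiat, which contradicts the very theorem you invoke. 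To be fair, this tension is inherited from the paper itself: its sign conventions are not internally consistent (for instance, the proof of Theorem \ref{d} asserts both $\int_{\partial B}\partial^\perp g_\eta\,d\sigma=1$ and $c_0+Kc_0=0$ for constants $c_0$, which cannot both hold), and $\psi+K'\psi=g$ is the classical Kress form of the interior Neumann equation, which presupposes the classical convention in which the interior trace of the single-layer normal derivative carries the $+\psi$ jump. A complete proof must either (i) re-derive the jump relation and show that in this setting the interior trace indeed carries $+\psi$, thereby correcting the $\pm$ labels of Theorem \ref{b}, or (ii) accept Theorem \ref{b} as stated and change the integral equation in the present theorem to $K'\psi-\psi=g$. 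As written, your argument does neither: it selects the sign that makes the statement come out, which is precisely the point that needed proof.
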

\begin{proof}
This follows from Theorem \ref{b}.
\end{proof}
\begin{thm} \label{e}
The interior Neumann problem is solvable if and only if $$\int_{\partial B}{g\; d\sigma}=0,$$ is satisfied. 
\end{thm}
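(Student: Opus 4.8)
The plan is to treat the two implications separately: necessity will come from Green's first identity, and sufficiency from the Fredholm theory already assembled for the operators $K$ and $K'$.

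For the necessity (``only if''), suppose $u \in \emph{C}_*(B)$ solves the interior Neumann problem, so that $L_0 u = 0$ in $B$ and $\partial^\perp u = g$ on $\partial B$. I would apply Green's first identity (Lemma \ref{y}) to the pair consisting of the constant function $v \equiv 1$ and $u$. Since $\nabla_0(1) = 0$ and $L_0 u = 0$, the entire right-hand side $\int_B (v L_0 u + \nabla_0 v . \nabla_0 u)\, d\eta$ vanishes, leaving $\int_{\partial B} \partial^\perp u \, d\sigma = 0$. Replacing $\partial^\perp u$ by its boundary value $g$ yields $\int_{\partial B} g\, d\sigma = 0$, which is precisely the asserted compatibility condition.

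For the sufficiency (``if''), I would seek a solution in the form of the single-layer potential $m$ with a density $\psi$ to be determined. By the preceding theorem, $m$ solves (\ref{7}) as soon as $\psi$ satisfies the boundary integral equation $(I + K')\psi = g$. Because $K'$ is compact with adjoint $K$ relative to the dual system $\langle C(\partial B), C(\partial B)\rangle$, the Fredholm alternative (\cite{kres}) applies: the equation $(I + K')\psi = g$ is solvable precisely when $g$ is orthogonal, in this dual system, to every element of the null space of the adjoint operator $I + K$. By Theorem \ref{d} this null space is one-dimensional and spanned by the constant $c_0$, so the orthogonality condition reads $\langle g, c_0 \rangle = c_0 \int_{\partial B} g\, d\sigma = 0$, i.e. $\int_{\partial B} g\, d\sigma = 0$. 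Thus, under the stated hypothesis, the integral equation admits a solution $\psi$, and the corresponding single-layer potential solves the interior Neumann problem.

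The two paragraphs together establish the equivalence. I expect the delicate point to be the sufficiency direction, specifically the correct invocation of the Fredholm alternative: one must verify that the compatibility condition $\int_{\partial B} g\, d\sigma = 0$ matches exactly the orthogonality of $g$ to the null space of $I + K$, which hinges on the identification (from Theorem \ref{d}) of that null space with the constants. A secondary point is to keep the sign conventions of the integral equation and of the jump relation in Theorem \ref{b} consistent, so that it is genuinely $I + K'$ (and not $I - K'$) whose solvability is being analyzed; only then does the adjoint null space coincide with $N(I + K) = \mathrm{span}\{c_0\}$.
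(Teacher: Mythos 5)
Your proposal is correct and follows essentially the same route as the paper: necessity from a Green identity applied with $v\equiv 1$ (the paper invokes the second identity (\ref{***}) rather than Lemma \ref{y}, an immaterial difference), and sufficiency via the single-layer ansatz, the integral equation $(I+K')\psi=g$, Theorem \ref{d}, and the Fredholm alternative from \cite{kres}. Your write-up is in fact more explicit than the paper's terse sufficiency paragraph in identifying the compatibility condition $\int_{\partial B} g\,d\sigma=0$ with orthogonality of $g$ to $N(I+K)=\mathrm{span}\{c_0\}$ in the stated dual system.
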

\begin{proof} \textbf{Necessary Part}\\
Using identity (\ref{***}) for a solution $u$ of (\ref{7}) and $v=1$, we have $$\int_{\partial B}{g\; d\sigma}=0.$$
\textbf{Sufficient Part}\\
Its sufficiency follows from the fact that by Theorem \ref{d} it coincides with the solvability condition of the Fredholm alternative for the inhomogeneous integral equation i.e,$\psi+K'\psi=g.$ \\ By Fredholm alternative for integral equations of the second kind stated as a corollary in (\cite{kres}, Corollary 4.18) the solution of the interior Neumann problem exist if $\int_{\partial B}{g\; d\sigma}=0.$ 
\end{proof}
\section{Neumann fuction for kor\'{a}nyi ball} 
In this section we construct an explicit representation formula for Neumann function for Laplacian over the Kor\'{a}nyi ball in $\H_n$ by means of the fundamental solution for Laplacian, Kelvin tranform and spherical harmonics. Then, making use of this Neumann function, the solution of the Neumann problem for the Poisson equation is given explicitly.\\ Let $B$ denotes the Kor\'{a}nyi ball in $\H_n$, i.e, $B=\{\xi=[z,t]\in \H_n: N(\xi)<1\}$.\\ For a function $f$ on $\H_n$, the Kelvin transform is defined as in \cite{kor82} by $$Kf= N^{-2n}foh,$$ where $h$ is the inversion defined as $$h([z,t])=\left[\frac{-z}{|z|^2-it},\frac{-t}{|z|^4+t^2}\right],$$ for $[z,t] \in\H_n\setminus\{e\}$.\\ For complex $\alpha, \beta$, the functions $C_m^{(\alpha, \beta)}$ $(m=0, 1, 2, \ldots)$ are defined by the generating function $${(1-z\bar{\varsigma})}^{-\alpha}{(1-z\varsigma)}^{-\beta}=\sum_{m=0}^\infty{z^mC_m^{(\alpha, \beta)}(\varsigma, \bar{\varsigma})}, \;z,\;\varsigma \in \C,\;|z|<|\varsigma|^{-1}.$$ It follows immediately that $$C_m^{(\alpha, \beta)}(\varsigma,\bar{\varsigma})=\sum_{p=0}^m{\frac{(\alpha)_{m-p}(\beta)_p}{(m-p)!p!}(\bar{\varsigma})^{m-p}\varsigma^p},\; \varsigma\in\C.$$ Special case $C_m^{(\alpha, \alpha)}(\varsigma,\bar{\varsigma})$ is denoted by Gegenbauer polynomial. In \cite{grko}, by using the function $C_m^{(\alpha, \beta)}$, the fundamental solution and its Kelvin transform are expressed as

\ben
g_{\eta^{-1}}(\xi^{-1})&=&(|z|^4+t^2)^{-n/2}\sum_{m=0}^\infty\sum_{k,l=0}^\infty\sum_{j=1}^{N_{k,l}}a_{m;k,l}\;i^{k-l}(|z|^2+it)^{-m-l}(|z|^2-it)^{-m-k}\\
&&\times C_m^{(\frac{n}{2}+l, \frac{n}{2}+k)}(t+i|z|^2) Y_{k,l;j}(z)C_m^{(\frac{n}{2}+l, \frac{n}{2}+k)}(t'+i|z'|^2)\overline{Y_{k,l;j}(z')},
\een

$$K(g_{\eta^{-1}}(\xi))=\sum_{m=0}^\infty\sum_{k,l=0}^\infty\sum_{j=1}^{N_{k,l}}{a_{m;k,l}C_m^{(\frac{n}{2}+l, \frac{n}{2}+k)}(t+i|z|^2)Y_{k,l;j}(z)C_m^{(\frac{n}{2}+l, \frac{n}{2}+k)}(t'+i|z'|^2)\overline{Y_{k,l;j}(z')}},$$ where $N_{k,l}$ is dimension of the space $H_{k,l}$. The space $H_{k,l}$ of complex (solid) spherical harmonics of bidegree $(k,l)$ on $\C^n$ consists of all polynomials $P$ in $z_1,\ldots z_n,\;\bar{z_1},\ldots \bar{z_n}$, homogeneous of degree $k$ in the $z_j's$ and homogeneous of degree $l$ in the $\bar{z_j}'s$  satisfying $$\sum_{j=1}^n{\frac{\partial^2}{\partial z_j \partial \bar{z_j}}P}=0.$$ For each $k,l$ the ${Y_{k,l;j}}'s$ form a basis for $H_{k,l}$, where ${Y_{k,l;j}}'s$ have the form 
$$Y_{k,l;j}(z)=\sum_{q=0}^r{c_q|z^*|^{2q}{z_1}^{k-q}{(\bar{z}_1)}^{l-q}},$$ $z=(z_1,z^*)\in\C^n,\; |z^*|^2=|z_2|^2+\ldots+|z_n|^2,\; \text{where},\;r=min(k,l)\;\text{and}\;c_0,\ldots,c_r$ are constants, $c_0=1$ and ${c_q}'s$ are determined by the relation $$(k-q)(l-q)c_q+(q+1)(n+q-1)c_{q+1}=0,\;0\leq q<r.$$ Now,
\ben
{\bar{Y}}_{k,l;j}&=&\frac{1}{2\pi}\int_0^{2\pi}{Y_{k,l;j}(ze^{i\theta})d\theta}\\
&=&\sum_{q=0}^k{c_q|z^*|^{2q}{|z_1|}^{2(k-q)}}\\
&=&Y_{k;j}(z),
\een and ${c_q}'s$ are determined by the relation $${(k-q)}^2c_q+(q+1)(n+q-1)c_{q+1}=0,\;0\leq q<k.$$ An easy computation yields the following expressions,
\ben
{\bar{g}}_{\eta^{-1}}(\xi^{-1})&=&(|z|^4+t^2)^{-n/2}\sum_{m=0}^\infty\sum_{k,l=0}^\infty\sum_{j=1}^{N_k}a_{m;k}(|z|^4+t^2)^{-m-k}C_m^{(\frac{n}{2}+k, \frac{n}{2}+k)}(t+i|z|^2)\\
&&\times Y_{k;j}(z)C_m^{(\frac{n}{2}+k, \frac{n}{2}+k)}(t'+i|z'|^2)\overline{Y_{k;j}(z')},
\een and,$$K({\bar{g}}_{\eta^{-1}}(\xi))=\sum_{m=0}^\infty\sum_{k,l=0}^\infty\sum_{j=1}^{N_k}{a_{m;k}C_m^{(\frac{n}{2}+k, \frac{n}{2}+k)}(t+i|z|^2)Y_{k;j}(z)C_m^{(\frac{n}{2}+k, \frac{n}{2}+k)}(t'+i|z'|^2)\overline{Y_{k;j}(z')}}.$$ The homogeneous Neumann problem is
\begin{eqnarray} 
L_0 N_B(\eta, \xi)&=&\delta_\eta \;\text{in}\; B, \label{11}\\
\partial^\perp N_B(\eta, \xi)&=&0 \;\text{on}\; \partial B \label{12}.
\end{eqnarray} Here $L_0$ denotes the Laplacian and $\partial^\perp$ denotes the outward pointing horizontal normal unit vector on the boundary of the Kor\'{a}nyi ball and it is explicitly given in \cite{kori} as $$\partial^\perp=\frac{1}{|z|}(\bar{A}E+A\bar{E}),\; \text{for all points where}\; |z|\neq 0,$$ where $E=\sum{z_j Z_j}$ and $A=|z|^2+it$. Now we try to look for a solution of problem $(\ref{11})$ and $(\ref{12})$ together in the following form $$N_B(\eta, \xi)={\bar{g}}_{\eta^{-1}}(\xi^{-1})+K({\bar{g}}_{\eta^{-1}}(\xi))+h_m(\eta,\xi),$$ where ${\bar{g}}_{\eta^{-1}}(\xi^{-1})$ is the fundamental solution for the Laplacian, namely\\ $L_0 {\bar{g}}_{\eta^{-1}}(\xi^{-1})=\delta_\eta$ and $K({\bar{g}}_{\eta^{-1}}(\xi))$ is harmonic in $B$. So, the function $h_m(\eta,\xi)$ is a circular function that satisfies
\begin{eqnarray} 
L_0 h_m(\eta, \xi)&=&0,\;\xi\in B, \label{13}\\
\partial^\perp h_m(\eta, \xi)&=&-\frac{\partial}{\partial n_0}\left({\bar{g}}_{\eta^{-1}}(\xi^{-1})+K({\bar{g}}_{\eta^{-1}}(\xi))\right), \;\xi\in\partial B. \label{14}
\end{eqnarray} Suppose that there exists a solution for the problem $(\ref{13})$ and $(\ref{14})$ together  in the following form i.e, in the form of Heisenberg harmonics
\begin{equation*}
h_m(\eta,\xi)=\sum_{\substack {k\\
 m-2k\geq 0\\
 and\; even}}\sum_{j=1}^{N_k}{b_{m;k}(z',t')C_{\frac{1}{2}(m-2k)}^{(\frac{n}{2}+k,\frac{n}{2}+k)}(t+i|z|^2)Y_{k;j}(z)},
\end{equation*}
 where $b_{m;k}(z',t')$ are functions of $\eta=(z',t')$ to be determined.\\ Now, we claim that $N_B(\eta, \xi)={\bar{g}}_{\eta^{-1}}(\xi^{-1})+K({\bar{g}}_{\eta^{-1}}(\xi))+h_m(\eta,\xi)$ works as Neumann function when applied to circular functions.\\ Since all the three series of $\bar{g}_{\eta^{-1}}(\xi^{-1}),K({\bar{g}}_{\eta^{-1}}(\xi))$ and $h_m(\eta,\xi)$ are absolutely and uniformly convergent for $\xi \neq \eta$ and $K({\bar{g}}_{\eta^{-1}}(\xi))$ and $h_m(\eta,\xi)$ are harmonic. Therefore, $$L_0 N_B(\eta, \xi)=L_0{\bar{g}}_{\eta^{-1}}(\xi^{-1})=\delta_\eta.$$ We can easily calculate that $E(Y_{k;j}(z))=kY_{k;j}(z)$ and $\bar{E}(Y_{k;j}(z))=kY_{k;j}(z)$, so $$\partial^\perp(Y_{k;j}(z))=2|z|kY_{k;j}(z).$$ Computing the normal derivative of the fundamental solution, we have
\ben
\partial^\perp{\bar{g}}_{\eta^{-1}}(\xi^{-1})&=&(|z|^4+t^2)^{-n/2}\sum_{m=0}^\infty\sum_{k,l=0}^\infty\sum_{j=1}^{N_k}a_{m;k}(|z|^4+t^2)^{-m-k}C_m^{(\frac{n}{2}+k, \frac{n}{2}+k)}(t+i|z|^2)\\
&&\times C_m^{(\frac{n}{2}+k, \frac{n}{2}+k)}(t'+i|z'|^2)\overline{Y_{k;j}(z')}\left(\frac{\partial}{\partial n_0}(Y_{k;j}(z))+2|z|Y_{k;j}(z)(-m-2k-n)\right)\\
&=& (|z|^4+t^2)^{-n/2}\sum_{m=0}^\infty\sum_{k,l=0}^\infty\sum_{j=1}^{N_k}a_{m;k}(|z|^4+t^2)^{-m-k}C_m^{(\frac{n}{2}+k, \frac{n}{2}+k)}(t+i|z|^2)\\
&&\times C_m^{(\frac{n}{2}+k, \frac{n}{2}+k)}(t'+i|z'|^2)\overline{Y_{k;j}(z')}Y_{k;j}(z)2|z|(-m-k-n).
\een Similarly, we have 
\ben
\partial^\perp K({\bar{g}}_{\eta^{-1}}(\xi))&=&\sum_{m=0}^\infty\sum_{k,l=0}^\infty\sum_{j=1}^{N_k}a_{m;k}C_m^{(\frac{n}{2}+k, \frac{n}{2}+k)}(t+i|z|^2) C_m^{(\frac{n}{2}+k, \frac{n}{2}+k)}(t'+i|z'|^2)\\
&&\times \overline{Y_{k;j}(z')}Y_{k;j}(z)2|z|(m+k).
\een
 So, on the boundary of Kor\'{a}nyi ball i.e, at $N(\xi)=1$, we get
\ben
\partial^\perp({\bar{g}}_{\eta^{-1}}(\xi^{-1})+K({\bar{g}}_{\eta^{-1}}(\xi)))&=& -2n|z|\sum_{m=0}^\infty\sum_{k,l=0}^\infty\sum_{j=1}^{N_k}a_{m;k}C_m^{(\frac{n}{2}+k, \frac{n}{2}+k)}(t+i|z|^2)\\
&&\times C_m^{(\frac{n}{2}+k, \frac{n}{2}+k)}(t'+i|z'|^2)\overline{Y_{k;j}(z')}Y_{k;j}(z).
\een Also, at $N(\xi)=1$, we have
\begin{equation*}
\partial^\perp h_m(\eta,\xi)=\sum_{\substack{k\\
m-2k\geq 0\\
 and\; even}}\sum_{j=1}^{N_k}{b_{m;k}(z',t')C_{\frac{1}{2}(m-2k)}^{(\frac{n}{2}+k,\frac{n}{2}+k)}(t+i|z|^2)2|z|Y_{k;j}(z)\frac{m}{2}}.
\end{equation*} 
Therefore, to satisfy $(\ref{14})$, choose $b_{m;k}(z',t')$ such that $$b_{m;k}(z',t')=\frac{2n}{m}a_{m;k}C_m^{(\frac{n}{2}+k, \frac{n}{2}+k)}(t'+i|z'|^2)\overline{Y_{k;j}(z')},\;m \in \N.$$ Notice that when $m=0$ so $k=0$ and $C_0^{(\frac{n}{2}, \frac{n}{2})}(t'+i|z'|^2)=1,\;\bar{Y}_{0;j}(z')=1$ thus we get
$$h_m(\eta,\xi)=\sum_{k=1}^\infty\sum_{m=1}^\infty\sum_{j=1}^{N_k}{\frac{2n}{m}a_{m;k}C_{\frac{1}{2}(m-2k)}^{(\frac{n}{2}+k,\frac{n}{2}+k)}(t+i|z|^2)Y_{k;j}(z)C_m^{(\frac{n}{2}+k, \frac{n}{2}+k)}(t'+i|z'|^2)\overline{Y_{k;j}(z')}+b_0},$$ where $b_0$ is a constant.\\ Since these variations over $m,\;k$ are infinite, so we have
$$\partial^\perp h_m(\eta, \xi)=-\partial^\perp \left({\bar{g}}_{\eta^{-1}}(\xi^{-1})+K({\bar{g}}_{\eta^{-1}}(\xi))\right), \;\xi\in\partial B.$$
Hence, $N_B(\eta, \xi)={\bar{g}}_{\eta^{-1}}(\xi^{-1})+K({\bar{g}}_{\eta^{-1}}(\xi))+h_m(\eta,\xi)$ works as Neumann function for $B$ when applied to circular functions.\\ The inhomogeneous circular Neumann boundary value problem for $B$ can be solved by using Neumann function $ N_B(\eta, \xi)$ which is proved as follows.
\begin{thm}
The inhomogeneous Neumann boundary problem
\begin{eqnarray} \label{25}
L_0 u&=&f \;\text{in}\; B,\nonumber \\
\partial^\perp u&=&g \;\text{on}\; \partial B,\; u \in \emph{C}_*(B),
\end{eqnarray}  where $f$ and $g$ are continuous circular functions, is solvable if and only if
\begin{eqnarray*} 
\int_B{f(\xi)dv(\xi)}=\int_{\partial B}{g(\xi)d\sigma(\xi)},
\end{eqnarray*} and the solution is given by the representation formula
\begin{eqnarray*} 
u(\eta)=\int_B{N_B(\eta,\xi)f(\xi)dv(\xi)}-\int_{\partial B}{N_B(\eta,\xi)g(\xi)d\sigma(\xi)}.
\end{eqnarray*} 
\end{thm}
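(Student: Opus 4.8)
The plan is to break the statement into three pieces: necessity of the compatibility condition, existence of a solution (sufficiency), and the representation formula; each rests on the Green identities (\ref{***}) and Lemma \ref{y} together with the defining properties (\ref{11})--(\ref{12}) of $N_B$.

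For \textbf{necessity} I would simply apply Gaveau's Green identity (\ref{***}) to the pair $(u,1)$, where $u$ is any solution. Since $L_0 1=0$ and $\partial^\perp 1=0$, the identity collapses to $-\int_B L_0 u\,dv=-\int_{\partial B}\partial^\perp u\,d\sigma$, that is $\int_B f\,dv=\int_{\partial B} g\,d\sigma$. This is immediate and requires nothing beyond (\ref{***}).

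For \textbf{sufficiency} I would not verify the representation formula from scratch but reduce to the homogeneous--source problem already settled in Theorem \ref{e}. First choose a particular $u_0$ with $L_0u_0=f$ in $B$; the volume (Newtonian-type) potential $u_0(\eta)=\int_B \bar g_{\xi^{-1}}(\eta^{-1})\,f(\xi)\,dv(\xi)$ built from the fundamental solution serves this purpose, and it may be taken circular because $f$ is. Its first horizontal derivatives extend continuously to $\bar B$, so $g_0:=g-\partial^\perp u_0\in C(\partial B)$, and I then seek $u_1$ solving $L_0u_1=0$, $\partial^\perp u_1=g_0$. By Green's first identity (Lemma \ref{y}) with $v=1$ one has $\int_{\partial B}\partial^\perp u_0\,d\sigma=\int_B L_0u_0\,dv=\int_B f\,dv$, whence the compatibility hypothesis gives $\int_{\partial B}g_0\,d\sigma=\int_{\partial B}g\,d\sigma-\int_B f\,dv=0$. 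Theorem \ref{e} then furnishes $u_1$, and $u:=u_0+u_1\in \emph{C}_*(B)$ solves the full inhomogeneous problem (uniqueness up to a constant being already known).

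To obtain the \textbf{representation formula} I would apply (\ref{***}) to $u$ and $v(\xi)=N_B(\eta,\xi)$ for fixed $\eta\in B$. Because $N_B(\eta,\cdot)$ has an integrable singularity at $\xi=\eta$, the identity is first applied on $B\setminus B[\eta;\epsilon]$ and then one lets $\epsilon\to 0$. On that region $L_{0,\xi}N_B=0$, so the only volume term is $-\int_B N_B(\eta,\xi)f(\xi)\,dv(\xi)$; on $\partial B$ the term $\int_{\partial B}u\,\partial^\perp N_B\,d\sigma$ drops out by (\ref{12}), leaving $-\int_{\partial B}N_B g\,d\sigma$; and the small-gauge-sphere integral over $\partial B[\eta;\epsilon]$ reproduces $u(\eta)$ with coefficient one, thanks to the normalization constant $a_0$. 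Rearranging gives $u(\eta)=\int_B N_B f\,dv-\int_{\partial B}N_B g\,d\sigma$.

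The \textbf{main obstacle} is precisely this singular limit: one must show $\int_{\partial B[\eta;\epsilon]}(u\,\partial^\perp N_B-N_B\,\partial^\perp u)\,d\sigma\to u(\eta)$, which needs the local asymptotics $N_B\sim a_0 N(\eta^{-1}\xi)^{-2n}$ together with the gauge-sphere surface-measure scaling $\epsilon^{2n+1}$ (so that the $N_B\,\partial^\perp u$ piece is $O(\epsilon)$ while the $u\,\partial^\perp N_B$ piece converges with coefficient exactly one). A further delicate point is that (\ref{12}) holds only modulo the constant ($m=0$) mode of $N_B$; this residual mode is exactly what the compatibility condition $\int_B f\,dv=\int_{\partial B}g\,d\sigma$ absorbs, and it reflects the solution being determined only up to an additive constant. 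Finally one must remember throughout that $N_B$ was constructed to act on circular data, so all the displayed integrals are legitimate only because $f$ and $g$ are circular.
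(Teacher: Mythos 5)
Your proposal is correct and follows the same skeleton as the paper's proof: necessity by taking $v=1$ in Gaveau's identity (\ref{***}); sufficiency by subtracting off a particular solution of $L_0u=f$ and invoking Theorem \ref{e} for the remaining homogeneous Neumann problem; and the representation formula by taking $v=N_B(\eta,\cdot)$ in (\ref{***}). The one genuine divergence is how the particular solution is produced. The paper solves the Dirichlet problem $L_0u_1=f$ in $B$, $u_1=0$ on $\partial B$ (asserting its solvability for continuous $f$ on the Kor\'{a}nyi ball) and sets $g'=g-\partial^\perp u_1$; you instead form the volume potential $u_0(\eta)=\int_B{\bar g_{\xi^{-1}}(\eta^{-1})f(\xi)\,dv(\xi)}$. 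Your route is more self-contained, since it bypasses the Dirichlet theory, but it carries a regularity caveat: for merely continuous $f$ a Newtonian-type potential need not be a classical solution of $L_0u_0=f$ (one normally needs $f$ H\"older or Dini continuous; for continuous $f$ the equation holds only distributionally), and the continuous extension of its horizontal derivatives to $\bar B$ is asserted rather than proved. In fairness, the paper's corresponding steps (``solvable for every continuous $f$'' and ``hence $\partial^\perp u_1$ has some value on $\partial B$'') are equally unproved, so you are no worse off. Two places where you are in fact more careful than the paper: you justify the formal use of $L_0N_B(\eta,\cdot)=\delta_\eta$ inside (\ref{***}) by excising a gauge ball $B[\eta;\epsilon]$ and controlling the boundary terms as $\epsilon\to 0$, whereas the paper simply substitutes $v=N_B$ and reads off the formula; and you flag the $m=0$ mode obstruction --- that $L_0N_B=\delta_\eta$ together with $\partial^\perp N_B=0$ on $\partial B$ is incompatible with (\ref{***}) unless the constant mode is handled separately and absorbed by the compatibility condition --- a point the paper passes over in silence.
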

\begin{proof}
\textbf{Necessary Part}\\ Using identity (\ref{***}) for a solution $u$ of inhomogeneous Neumann problem (\ref{25}) and $v=1$, we get the required solvability condition.\\
\textbf{Sufficient Part}\\
Firstly, we consider the inhomogeneous boundary value problem
\begin{eqnarray}\label{26}
L_0u_1&=&f \;\text{in}\; B,\nonumber \\
u&=&0 \;\text{on}\; \partial B,
\end{eqnarray}
where $f$ is continuous function.\\ This problem is solvable for every continuous function $f$ on the Kor\'{a}nyi ball in $\H_n.$ Hence, $\partial^\perp u_1$ has some value on $\partial B$.  \\ Next, we consider the homogeneous Neumann problem
\begin{eqnarray} \label{27}
L_0 u_2&=&0 \;\text{in}\; B,\nonumber \\
\partial ^ \perp u_2&=&g' \;\text{on}\; \partial B,\; u_2 \in \emph{C}_*(B)
\end{eqnarray}
where $g'=g-\partial^\perp u_1$ and $g$ is a continuous function.\\ By Theorem \ref{e}, interior Neumann problem (\ref{27}) is solvable if $\int_{\partial B}{g'}=0,$ $i.e,\; \int_{\partial B}{g}=\int_{\partial B}{\partial^\perp u_1}=\int_B{f}$ (by using (\ref{***})).\\Let $u=u_1+u_2.$\\ Since $L_0u=f$ and $\partial^\perp u=\partial^\perp u_1+g-\partial^\perp u_1=g$, therefore, $u$ is a solution of inhomogeneous Neumann boundary problem (\ref{25}) if 
$$\int_B{f(\xi)dv(\xi)}=\int_{\partial B}{g(\xi)d\sigma(\xi)}.$$ 
\textbf{Representation formula}\\
Take $u$ is a solution of (\ref{25}) and $v=N_B(\eta,\xi)$ in the identity (\ref{***}), we get
$$u(\eta)=\int_B{N_B(\eta,\xi)f(\xi)dv(\xi)}-\int_{\partial B}{N_B(\eta,\xi)g(\xi)d\sigma(\xi)},$$ where $f$ and $g$ are continuous circular functions.\\
This is the required representation formula for a solution of interior Neumann problem on the Kor\'{a}nyi ball in the Heisenberg group and holds for circular data only.
\end{proof}
\section*{acknowledgements}
 The first author is supported by the Senior Research Fellowship of Council of Scientific and Industrial Research, India (Grant no. 09/045(1152)/2012-EMR-I) and the second author is supported by R \& D grant from University of Delhi, Delhi, India.


\begin{thebibliography}{5}
\bibitem{bolaug} A. Bonfiglioli, E. Lanconelli, F. Uguzzoni, \textit{Stratified Lie Groups and Potential Theory for their sub-Laplacians}, Springer Monograph in Mathematics, Springer-Verlag Berlin-Heidelberg 2007.
\bibitem{copa} E. Constantin and N. H. Pavel, \textit{Green function of the Laplacian for the Neumann problem in $\R_+^n$}, Libertas Math. 30 (2010), pp. 57-69.
\bibitem{evan} Lawrence C. Evans, \textit{Partial differential equations}, AMS, Providence, Rhode Island, (1998).
\bibitem{fol} G. B. Folland, \textit{A Fundamental Solution for a subelliptic operator}, Bull. Am. Math. Soc.79 (1973), pp. 373-376.
\bibitem{fsc} B. Franchi, R. Serapioni and F. S. Cassano, \textit{Rectifiability and perimeter in the Heisenberg group}, Math. Ann. 321 (2001), pp. 479-531.
\bibitem{gav} B. Gaveau, \textit{Principe de moindre action, propagation da la chaleur et estim\'{e}es sous-elliptiques sur certaiins groupes nilpotents}, Acta Math, 139 (1977), pp. 95-153.
\bibitem{gav2} B. Gaveau, \textit{Syst$\grave{e}$mes dynamiques associ$\acute{e}$s $\grave{a}$ certains op$\acute{e}$rateurs hypoelliptiques}, Bull. Sc. Math., Paris, $2^c$ s$\acute{e}$rie, T. 102, 1978, pp. 203-229.
\bibitem{grko} P. C. Greiner and T. H. Koornwinder, \textit{Variations on the Heisenberg spherical harmonics}, Report ZW 186/83 Mathematisch Centrum, Amsterdam, 1983.
\bibitem{jeri} David S. Jerison, \textit{The Dirichlet problem for the Kohn Laplacian on the Heisenberg group}, I, J. Func. Anal. 43 (1981), pp. 97-142.
\bibitem{kor82} A. Kor\'{a}nyi, \textit{Kelvin transforms and harmonic polynomials on the Heisenberg group}, J. Funct. Anal. 49 (1982), pp. 177-185.
\bibitem{kor83} A. Kor\'{a}nyi, \textit{Geometric aspects of analysis on the Heisenberg group}, in ``Topics in modern harmonic analysis", Instituto nazionale di Alta mathematica, Roma, 1983, pp. 209-258.
\bibitem{kori} A. Kor\'{a}nyi and H. M. Riemann, \textit{Horizontal normal vectors and conformal capacity of spehrical rings in the Heisenberg group}, Bull. Sci. Math. Ser. 2 111 (1987), pp. 3-21.
\bibitem{kor6}  A. Kor\'{a}nyi, \textit{Poisson formulas for circular functions and some groups of type H}, Sci. China Ser. A: Math. 49 (2006), pp. 1683-1695.
\bibitem{kres} R. Kress, \textit{Linear Integral Equations}, Third Edition, Applied Mathematical Sciences, Springer, New York, 2014.
\bibitem{naya} B. M. Nayar, \textit{Neumann function for the sphere, I,II}, Indian J. Pure Appl. Math. 12, no. 10, 1981, 1266-1282, 1283-1292.
\bibitem{EDR} Earl D. Rainville, \textit{Special functions}, The MacMillan Company, New York, 1960.
\bibitem{stein} E. M. Stein, \textit{Harmonic analysis: Real-variable methods, orthogonality and oscillatory integrals}, Princeton Univ. Press, Princeton, New Jersey (1993).
\bibitem{tay} M. E. Taylor, \textit{Partial diffrential equations I}, Applied Math. Sci., Second Edition, Springer, New York (2011).
\bibitem{xu} Xu Zhenyuan, \textit{On boundary value problem of Neumann type for hypercomplex function with values in a Clifford algebra}, Circolo Mathematico di Palermo, Serie II, Supplemento No. 22 (1990), pp. 213-226.
\end{thebibliography}
\end{document}